\theoremstyle{plain}
\newtheorem{thm}{Theorem}[subsection]
\newtheorem{prop}{Proposition}[subsection]
\newtheorem{lemma}{Lemma}[subsection]
\newtheorem{cor}{Corollary}[subsection]
\newtheorem*{conj}{Conjecture}
\theoremstyle{definition}
\newtheorem{defn}{Definition}[subsection]
\newtheorem{example}{Example}[subsection]
\definecolor{darkblue}{rgb}{0,0,0.7} 
\definecolor{darkred}{rgb}{0.7,0,0} 
\definecolor{darkgreen}{rgb}{0,0.7,0} 
\definecolor{darkpurple}{rgb}{0.5,0,0.7}
\newcommand\ZZ{\mathbb{Z}}
\newcommand\Acal{\widehat{\mathcal{F}}}
\newcommand\Bcal{\mathcal{B}}
\newcommand\Ecal{\mathcal{E}}
\newcommand\Fcal{\mathcal{F}}
\newcommand\Ical{\mathcal{I}}
\newcommand{\Addresses}{{
  \bigskip
  \footnotesize
  \textsc{Department of Mathematics, University of Southern California, 3620 S. Vermont Ave., Los Angeles, CA 90089}\par\nopagebreak
  \textit{E-mail address:} \texttt{armon@usc.edu}
}}
\title{Extremal subsets and atom-positivity}
\author{Sam Armon}
\date{}
\begin{document}

\maketitle

\begin{abstract}
Demazure crystals give a combinatorial framework in which to study Demazure modules. They are \emph{extremal}, in that they satisfy Kashiwara's string property, and they are \emph{Demazure atom-positive}, in that they decompose naturally into subsets called crystal Demazure atoms. The properties of extremality and atom-positivity are further linked by a conjecture of Polo, which concerns the structure of the tensor product of two Demazure modules. We study these two properties in isolation --- first providing a recursive formula for generating crystal Demazure atoms which generalizes a result of Lascoux-Sch\"{u}tzenberger, and then examining the structure of extremal subsets and their characters --- before commenting on the connection (or lack thereof) between them.
\end{abstract}

\section{Introduction}
The finite-dimensional simple modules $V(\lambda)$ of a (connected, simply connected) complex semisimple Lie group $G$ are parameterized by dominant weights $\lambda \in P^+$. The representation theory of $G$ is fairly well-understood; in particular, for $\lambda, \mu \in P^+$, the tensor product $V(\lambda) \otimes V(\mu)$ admits a \emph{good filtration}; that is, a filtration in which each successive quotient is isomorphic to $V(\nu)$ for some $\nu \in P^+$. The theory of crystal bases, developed independently by Kashiwara \cite{Kas90} and Lusztig \cite{Lus}, provides a combinatorial model for the representation theory of $G$. In particular, there exists for each $\lambda \in P^+$ a \emph{highest weight crystal} $\Bcal(\lambda)$ --- which one may regard as a directed, weighted graph --- whose character agrees with $V(\lambda)$, and the tensor product of highest weight crystals $\Bcal(\lambda) \otimes \Bcal(\mu)$ decomposes as a direct sum of highest weight crystals, witnessing the good filtration of $V(\lambda) \otimes V(\mu)$.

For $B \subseteq G$ a Borel subgroup, the \emph{Demazure module} $V_w(\lambda) \subseteq V(\lambda)$ is a cyclic $B$-submodule indexed by an element $w \in W$ of the Weyl group, generated by the (unique, up to a scalar) extremal weight vector of weight $w \cdot \lambda$ in $V(\lambda)$ \cite{Dem74a}. Generalizing the Borel-Weil-Bott theorem, $V_w(\lambda)$ may be identified with $H^0(X_w,\mathcal{L}_{-\lambda})$, the space of sections of a line bundle $\mathcal{L}_{-\lambda}$ associated to the antidominant weight $-\lambda$ over a Schubert variety $X_w \subseteq G/B$. On the level of crystals, Kashiwara \cite{Kas93} and Littelmann \cite{Lit} construct a subset $\Bcal_w(\lambda) \subseteq \Bcal(\lambda)$ whose character agrees with the Demazure module $V_w(\lambda)$, and this character is computed via the refined Demazure character formula \cite{Jos}. 

A $B$-module is said to have an \emph{excellent filtration} if each successive quotient in the filtration is isomorphic to $V_w(\lambda)$ for some $\lambda \in P^+$ and $w \in W$. The tensor product $V_v(\lambda) \otimes V_w(\mu)$ does not, in general, admit an excellent filtration, and correspondingly, $\Bcal_v(\lambda) \otimes \Bcal_w(\mu)$ does not always decompose as a direct sum of Demazure crystals. A conjecture of Polo \cite{Polo} states, however, that this tensor product should always admit a more general $B$-module filtration called a \emph{Schubert filtration}, whose successive quotients are isomorphic to a $B$-module of the form $H^0(S,\mathcal{L}_\lambda)$, where $S$ is a \emph{union} of Schubert varieties. We study two crystal-theoretic properties --- atom-positivity, and extremality --- related to Polo's conjecture, which we will now briefly outline.

In type $A$, Demazure characters form a basis for the polynomial ring $\ZZ[t_1,\ldots,t_n]$ which lifts the basis of Schur polynomials for the ring of symmetric polynomials. Polynomial Demazure atoms --- originally studied by Lascoux-Sch\"{u}tzenberger \cite{LS} --- decompose the Demazure characters, in that Demazure characters are positive sums of polynomial Demazure atoms. These polynomials were shown by Mason \cite{Mason} to be certain specializations of nonsymmetric Macdonald polynomials in type $A$, and \cite{BBBG} construct a solvable lattice model whose partition function equals the Demazure atom. The product of Demazure characters cannot, in general, be written as a positive sum of Demazure characters, but Polo's conjecture suggests that this product should expand positively in the basis of Demazure atoms; this more general conjecture was also directly proposed by Pun \cite{Pun}, who solved it in the special case that the indexing compositions each have length $\le 3$.

For each $\lambda \in P^+$ and $w \in W$, Kashiwara \cite{Kas02} defines a subset $\mathcal{A}_w(\lambda) \subseteq \Bcal(\lambda)$ called a crystal Demazure atom which lifts the above decomposition to the level of crystals; that is, Demazure crystals naturally decompose into disjoint unions of crystal Demazure atoms, and (at least in type $A$) the character of a crystal Demazure atom is a polynomial Demazure atom. Particular crystal subsets which play a vital role in Polo's conjecture have the \emph{atom-positivity} property --- that is, they decompose into a disjoint union of crystal Demazure atoms. To this end, we define a new family of operators $\{ \Acal_i : i \in I \}$, and prove that they may be used to recursively generate entire crystal Demazure atoms in Theorem \ref{thm:atomic-op}. This theorem generalizes a result of Lascoux-Sch\"{u}tzenberger in type $A$ to arbitrary finite type, and provides a global alternative to the right key map of Lascoux-Sch\"{u}tzenberger, which was later generalized by Jacon-Lecouvey \cite{JL} and Santos \cite{Santos}.

The second property we consider is \emph{extremality}; this is one of the three defining properties of Demazure crystals found by Kashiwara, and in particular is the only property which does not directly follow from by necessity (see Theorem \ref{thm:dem-def}(3)). While $\Bcal_v(\lambda) \otimes \Bcal_w(\mu)$ does not, in general, decompose as a direct sum of Demazure crystals, Kouno \cite{Kouno} found a global condition which is necessary and sufficient for this decomposition to hold. A recent theorem of \cite{ADG} replaces Kouno's global condition with the local condition of extremality; that is, $\Bcal_v(\lambda) \otimes \Bcal_w(\mu)$ is a direct sum of Demazure crystals if and only if the tensor product is extremal. We prove that extremal subsets are generated by their lowest weight elements, and furthermore that an extremal subset generated by a single lowest weight element is uniquely determined by its character in Proposition \ref{prop:ext-char}. We also examine the relationship between extremality and atom-positivity due to their shared relationship with Polo's conjecture, but find that the relationship is not so direct; in particular, we construct an extremal subset whose character is not a positive sum of polynomial Demazure atoms, and an atom-positive subset which is not extremal.

The paper is structured as follows. In Section \ref{sec:background} we review the necessary facts about semisimple Lie groups, and then introduce highest weight crystals and Demazure crystals. In Section \ref{sec:atoms} we define crystal Demazure atoms and introduce a new family of operators which allows one to recursively construct these subsets in the crystal, before discussing the notion of (strong and weak) atom-positivity. Finally, in Section \ref{sec:extremal} we define extremal subsets and study their characters.

\section{Crystals}\label{sec:background}
Let $G$ be a connected, simply connected, complex semisimple Lie group, with $B \subseteq G$ a Borel subgroup and $T \subseteq B$ a maximal torus. The adjoint action of $T$ on the associated Lie algebra $\mathfrak{g}$ determines a root system $\Phi$; we let $I$ denote the index set of the corresponding Dynkin diagram, and $P \supseteq \Phi$ the weight lattice of $\mathfrak{g}$. Let $\{ \alpha_i \}_{i \in I} \subset \Phi$ denote the set of simple roots, $\{ \alpha_i^\vee \}_{i \in I} \subset \Phi^\vee$ the set of simple coroots, and $\{ s_i \}_{i \in I}$ the corresponding set of simple reflections.

The Weyl group $W$ is generated by $\{ s_i \}_{i \in I}$. If $w = s_{i_k} \cdots s_{i_1}$ for some $i_1, \ldots, i_k \in I$, where $k$ is minimal among all such expressions for $w \in W$, then we say that $s_{i_k} \cdots s_{i_1}$ is a \emph{reduced word} for $w$, and that the \emph{length} of $w$ is $\ell(w) = k$. The \emph{Bruhat order} on $W$ is the partial order $\le$ determined by the \emph{subword property}: for $v, w \in W$, we have $v \le w$ if and only if, for any reduced word for $w$, there exists a subword which is a reduced word for $v$.

The Weyl group $W$ acts on the weight lattice $P$. For any $\lambda \in P^+$, let $W_\lambda \le W$ denote the stabilizer subgroup of $\lambda$. Many of the constructions in this paper depending on $w \in W$ will depend only on the left coset of $w$ in $W / W_\lambda$ for a given $\lambda \in P^+$. To this end, we let $W^\lambda$ denote the set of minimal-length coset representatives in $W / W_\lambda$.


The cone $P^+\subset P$ of dominant weights is given by
\[
P^+ = \{ \lambda \in P : \langle \lambda, \alpha_i^\vee \rangle \ge 0 \text{ for all } i \in I \},
\] 
and the simple $G$-modules $V(\lambda)$ are parameterized by $\lambda \in P^+$. Let $\mathcal{O}$ denote the free ableian group generated by $\{ t^\beta : \beta \in P \}$. Each $G$-module $V$ decomposes into weight spaces $V \cong \bigoplus_{\beta \in P} V_\beta$, and the formal character of $V$ is given by:
\[
\mathrm{char}(V) = \sum_{\beta \in P} \dim(V_\beta) t^\beta \in \mathcal{O}.
\]
\subsection{Highest weight crystals}
The theory of crystal bases provides a combinatorial model for the representation theory of the quantized universal enveloping algebra $U_q(\mathfrak{g})$ in the limit $q \to 0$. A \emph{(normal) crystal} $\Bcal$ is a finite set equipped with the data of: a weight map $\mathrm{wt} : \Bcal \to P$, raising and lowering operators $e_i, f_i : \Bcal \to \Bcal \sqcup \{ 0 \}$ for each $i \in I$ such that, for each $x,y \in \Bcal$,
\begin{enumerate}
\item[(A1)] $f_i(x) = y$ if and only if $e_i(y) = x$, in which case $\mathrm{wt}(y) = \mathrm{wt}(x) - \alpha_i$,
\end{enumerate}
along with string lengths $\varepsilon_i, \varphi_i : \Bcal \to \ZZ$ satisfying:
\begin{enumerate}
\item[(A2)] $\varphi_i(x) = \mathrm{max}( k : f_i^k(x) \neq 0 \}$, $\varepsilon_i(x) = \mathrm{max}( k : e_i^k(x) \neq 0 \}$, and
\item[(A3)] $\varphi_i(x) = \langle \mathrm{wt}(x), \alpha_i^\vee \rangle + \varepsilon(x)$.
\end{enumerate}
We depict a normal crystal as a directed, weighted graph with directed edges given by the lowering operators $\{ f_i \}_{i \in I}$. For any $i \in I$ and $x \in \Bcal$, let $f_i^*(x) = f_i^{\varphi_i(x)}$ (resp. $e_i^*(x) = e_i^{\varepsilon_i(x)}$), and define an equivalence relation $\sim_i$ on $\Bcal$ by $x \sim_i y$ if and only if $x = e_i^k(y)$, or vice-versa, for some $k \ge 0$. The equivalence classes are called \emph{$i$-strings}, and each equivalence class $S$ contains a unique highest weight element $u_S$ given by $u_S = e_i^*(x)$ for any $x \in S$.

For each $\lambda \in P^+$ the $G$-module $V(\lambda)$ admits a \emph{highest weight crystal} $\Bcal(\lambda)$ with a highest weight element $b_\lambda$ satisfying $\mathrm{wt}(b_\lambda) = \lambda$ and $e_i(b_\lambda) = 0$ for all $i \in I$. From now on we will only consider normal crystals whose connected components are isomorphic to $\Bcal(\lambda)$ for some $\lambda \in P^+$. The \emph{character} of a crystal $\Bcal$ is:
\[
\mathrm{char}(\Bcal) = \sum_{b \in \Bcal} t^{\mathrm{wt}(x)} \in \mathcal{O},
\]
and $\mathrm{char}(\Bcal(\lambda)) = \mathrm{char}(V(\lambda))$ recovers the formal character of the simple $G$-module $V(\lambda)$.

Furthermore, Kashiwara \cite{Kas90} defines the tensor product $\Bcal \otimes \mathcal{C}$ of two normal crystals; the vertex set is the Cartesian product $\Bcal \times \mathcal{C}$, the weight map is given by $\mathrm{wt}(x \otimes y) = \mathrm{wt}(x) + \mathrm{wt}(y)$, and the lowering operators $f_i$ are determined by the rule:
\[
f_i(x \otimes y) = \begin{cases}
f_i(x) \otimes y & \text{if } \varepsilon_i(x) \ge \varphi_i(y), \\
x \otimes f_i(y) & \text{if } \varepsilon_i(x) < \varphi_i(y).
\end{cases}
\]
The tensor product $\Bcal(\lambda) \otimes \Bcal(\mu)$ of highest weight crystals decomposes as a direct sum of highest weight crystals, and this replicates the decomposition on the module level; that is,
\[
V(\lambda) \otimes V(\mu) \cong \bigoplus_{\nu} V(\nu)^{\oplus c_{\lambda \mu}^\nu} \Longleftrightarrow \Bcal(\lambda) \otimes \Bcal(\mu) \cong \bigoplus_{\nu} \Bcal(\nu)^{\oplus c_{\lambda \mu}^\nu}.
\]

\subsection{Tableau crystals}
In type $A_{n-1}$, the dominant weights are identified with \emph{integer partitions} $\lambda = (\lambda_1 \ge \cdots \ge \lambda_n \ge 0) \in (\ZZ_{\ge 0})^n$, and there is a well-known realization of the highest weight crystal $\Bcal(\lambda)$ in terms of \emph{semistandard Young tableaux (SSYT)} of shape $\lambda$ which we will use to construct many of our examples. An SSYT of shape $\lambda$ is a filling $T : \lambda \to \ZZ_{>0}$ of $\lambda$ for which the entries weakly increase across rows, and strictly increase along columns. We denote the set of such fillings by $SSYT(\lambda)$. For instance,
\[
\ytableausetup{boxsize=1em}
\begin{ytableau}
3 & 4 \\
1 & 1 & 2
\end{ytableau} \in SSYT(3,2), \quad \text{but} \quad
\begin{ytableau}
3 & 4 \\
1 & {\color{red} 3} & {\color{red} 2}
\end{ytableau} \not\in SSYT(3,2), \quad
\begin{ytableau}
{\color{red} 1} & 4 \\
{\color{red} 1} & 1 & 2
\end{ytableau} \not\in SSYT(3,2).
\]
Note that we are using French (coordinate) notation to depict our integer partitions. 

The underlying vertex set for the type $A_{n-1}$ highest weight crystal $\Bcal(\lambda)$ is $SSYT_n(\lambda)$, which is the subset of $SSYT(\lambda)$ consisting of those tableaux filled with entries $\le n$. For instance,
\[
\ytableausetup{boxsize=1em}
\begin{ytableau}
2 & 4 \\
1 & 1 & 2
\end{ytableau} \in SSYT_4(3,2,0,0), \quad \text{but} \quad 
\begin{ytableau}
2 & {\color{red} 4} \\
1 & 1 & 2
\end{ytableau} \not\in SSYT_3(3,2,0).
\]
The weight map is given by:
\[
\mathrm{wt}(T) = (T^{-1}(1), \ldots, T^{-1}(n)) \in (\ZZ_{\ge 0})^n,
\]
where $T^{-1}(i)$ denotes the number of entries $=i$ in $T$. For instance, the weight of the above tableau is $(2,2,0,1)$. The raising and lowering operators are defined as follows.
\begin{defn}[\cite{KN94},\cite{Lit}]
Let $\lambda = (\lambda_1 \ge \cdots \ge \lambda_n \ge 0)$ and $T \in SSYT_n(\lambda)$. For $1 \le i \le n-1$, we iteratively \emph{$i$-pair} the entries of $i, i+1$ in $T$ by pairing an $i+1$ with an $i$ lying weakly to its right whenever all entries of $i,i+1$ lying weakly between them are already $i$-paired. Then:
\begin{itemize}
\item if all entries of $i+1$ in $T$ are $i$-paired, then $e_i(T) = 0$; otherwise, $e_i(T)$ is obtained by changing the \emph{leftmost} $i$-unpaired $i+1$ in $T$ into an $i$.
\item if all entries of $i$ in $T$ are $i$-paired, then $f_i(T) = 0$; otherwise, $f_i(T)$ is obtained by changing the \emph{rightmost} $i$-unpaired $i$ in $T$ into an $i+1$.
\end{itemize}
\end{defn}
The data of $(SSYT_n(\lambda), \{e_i, f_i, \varepsilon_i, \varphi_i \}_{i \in I})$ determines a directed, weighted graph which is isomorphic to the highest weight crystal $\Bcal(\lambda)$. There are also tableaux models for the other classical Lie types in \cite{KN94} and \cite{Lit} which we will not define here.

\subsection{Demazure crystals}
Demazure \cite{Dem74a} studied certain cyclic $B$-submodules $V_w(\lambda)$ of $V(\lambda)$. To define them, recall that $V(\lambda)$ decomposes into weight spaces as $V(\lambda) \cong \bigoplus_{\beta \in P} V(\lambda)_\beta$. The \emph{extremal weight spaces} are $\{ V(\lambda)_{w \cdot \lambda} : w \in W \}$, and are each one-dimensional. For any $w \in W$, the \emph{Demazure module} $V_w(\lambda)$ is the $B$-module generated by the extremal weight space $V(\lambda)_{w \cdot \lambda}$, and for $v, w \in W^\lambda$, we have the containment $V_v(\lambda) \subseteq V_w(\lambda)$ if and only if $v \le w$ in Bruhat order.

The character of $V_w(\lambda)$ can be obtained as follows. For any $\beta \in P$ and $i \in I$, the \emph{Demazure operator} $D_i$ acts on $\mathcal{O}$ via:
\[
D_i(t^\beta) = \frac{t^{\beta + \rho} - t^{s_i \cdot (\beta + \rho)}}{1 - t^{-\alpha_i}} t^{-\rho},
\]
where $\rho$ is the Weyl vector. The Demazure operators are idempotent and satisfy the same braid relations as the underlying Weyl group, so we may define $D_w := D_{i_k} \cdots D_{i_1}$ for any $w \in W$ with reduced word $w = s_{i_k} \cdots s_{i_1}$. The \emph{refined Demazure character formula} is:
\begin{thm}[\cite{Jos},\cite{Kas93},\cite{Lit}]
For $\lambda \in P^+$ and $w \in W$,
\[
\mathrm{char}(V_w(\lambda)) = D_w(t^\lambda) \in \mathcal{O}.
\]
\end{thm}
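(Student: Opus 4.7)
The plan is to induct on $\ell(w)$, reducing the theorem to the one-step identity
\[
\mathrm{char}(V_{s_iw}(\lambda)) = D_i(\mathrm{char}(V_w(\lambda))) \qquad \text{whenever } \ell(s_iw) = \ell(w)+1.
\]
Iterating this identity along a reduced expression for $w$ and using the well-definedness of $D_w$ recorded in the excerpt then recovers the theorem. The base case $w = e$ is immediate: $V_e(\lambda) = V(\lambda)_\lambda$ is the one-dimensional highest weight space, with character $t^\lambda = D_e(t^\lambda)$.

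The one-step identity rests on Demazure's lemma: when $\ell(s_iw) = \ell(w)+1$, one has $V_{s_iw}(\lambda) = U(\mathfrak{p}_i) \cdot V_w(\lambda)$, where $\mathfrak{p}_i := \mathfrak{b} + \mathbb{C} f_i$ is the minimal parabolic subalgebra. The inclusion $V_{s_iw}(\lambda) \subseteq U(\mathfrak{p}_i) V_w(\lambda)$ is the easier direction: $U(\mathfrak{p}_i) V_w(\lambda)$ is automatically $B$-stable (as $B \subseteq P_i$), so it suffices to produce the extremal vector of weight $s_iw\lambda$ inside it. This vector is obtained from the extremal vector of weight $w\lambda \in V_w(\lambda)$ by applying $f_i^{\langle w\lambda, \alpha_i^\vee\rangle}$ within the rank-one subalgebra spanned by $e_i, f_i, \alpha_i^\vee$; the exponent is non-negative because $\lambda$ is dominant and $w^{-1}\alpha_i \in \Phi^+$ (the latter being equivalent to the length hypothesis).

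Granted Demazure's lemma, the identity reduces to the character statement $\mathrm{char}(U(\mathfrak{p}_i)M) = D_i(\mathrm{char}(M))$ for any $B$-stable $M \subseteq V(\lambda)$. I would prove this by observing that $U(\mathfrak{p}_i)M$ is stable under $e_i, f_i, \alpha_i^\vee$, hence decomposes as a direct sum of irreducible weight strings, each with character of the form $t^\mu + t^{\mu-\alpha_i} + \cdots + t^{s_i\mu}$ for its highest weight $\mu$ satisfying $\langle \mu, \alpha_i^\vee\rangle \ge 0$. A direct evaluation of $D_i(t^\beta)$ from the defining formula, coupled with the idempotence of $D_i$ noted in the excerpt, then matches $D_i(\mathrm{char}(M))$ with the sum of these string characters, with contributions at weights of higher multiplicity combining correctly.

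The main obstacle is the reverse containment $V_{s_iw}(\lambda) \supseteq U(\mathfrak{p}_i)V_w(\lambda)$ in Demazure's lemma, which requires $V_{s_iw}(\lambda)$ to be closed under $f_i$. A clean route is to define auxiliary modules $\widetilde V_w(\lambda)$ recursively by $\widetilde V_e(\lambda) := V(\lambda)_\lambda$ and $\widetilde V_{s_iw}(\lambda) := U(\mathfrak{p}_i)\widetilde V_w(\lambda)$ along a reduced word, verify well-definedness using the braid relations in $U(\mathfrak{g})$, and then identify $\widetilde V_w(\lambda) = V_w(\lambda)$ by showing that $\widetilde V_w(\lambda)$ is the minimal $B$-stable subspace of $V(\lambda)$ containing $V(\lambda)_{w\lambda}$. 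A crystal-theoretic alternative, closer to the combinatorial viewpoint of later sections, would be to define $\Bcal_w(\lambda) \subseteq \Bcal(\lambda)$ recursively via iterated applications of $f_i$ and to prove the analogous character identity for $e_i$-closed crystal subsets.
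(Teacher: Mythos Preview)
The paper does not supply its own proof of this theorem; it is quoted from \cite{Jos}, \cite{Kas93}, \cite{Lit} as background, so there is no in-paper argument to compare against. That said, your outline follows the classical strategy and it is worth flagging where the real difficulty sits, because your third paragraph glosses over it.

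The assertion that $\mathrm{char}(U(\mathfrak{p}_i)M) = D_i(\mathrm{char}(M))$ for \emph{any} $B$-stable $M \subseteq V(\lambda)$ is false. Take $\mathfrak{g} = \mathfrak{sl}_3$, $\lambda = \rho$, and let $M$ be the sum of all weight spaces $V(\rho)_\beta$ with $\beta \in \{\alpha_1+\alpha_2,\alpha_1,\alpha_2,0\}$; this $M$ is $\mathfrak{b}$-stable of dimension $5$. One computes $D_1(\mathrm{char}(M))$ to have coefficient $3$ at $t^0$, whereas $U(\mathfrak{p}_1)M = M \oplus V(\rho)_{-\alpha_1}$ has $t^0$-coefficient $2$. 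The failure is exactly at ``contributions at weights of higher multiplicity combining correctly'': when $M$ already contains several weights along the same $i$-string, $D_i$ overcounts. Historically this is precisely the gap in Demazure's original argument.

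What makes the identity hold for $M = V_w(\lambda)$ is an additional structural fact: relative to a suitable basis, $V_w(\lambda)$ meets each $i$-string either trivially, in its top vector only, or in the whole string. On the crystal side this is exactly Theorem~\ref{thm:dem-def}(3), and establishing it (via Joseph's algebraic arguments, Kashiwara's crystal bases, or Littelmann's path model) is the substantive content of the cited references. Your final paragraph correctly isolates the reverse inclusion $V_{s_iw}(\lambda) \supseteq U(\mathfrak{p}_i)V_w(\lambda)$ as an obstacle, but the deeper issue is that even granting $V_{s_iw}(\lambda) = U(\mathfrak{p}_i)V_w(\lambda)$, the character step in your third paragraph still requires this string property as an independent input; neither the recursive definition of $\widetilde V_w(\lambda)$ nor the identification $\widetilde V_w(\lambda) = V_w(\lambda)$ supplies it on its own.
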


Littelmann \cite{Lit} proved the existence of a subset $\Bcal_w(\lambda)$ whose character agrees with $V_w(\lambda)$; Kashiwara \cite{Kas93} constructed it and derived many of its beautiful properties. By analogy, $\Bcal_w(\lambda)$ is called a \emph{Demazure crystal}. To understand its structure we require the following operators, defined for any $i \in I$ and any $X \subseteq \Bcal(\lambda)$:
\begin{align*}
\Fcal_i(X) &:= \{ f_i^d(x) : x \in X, d \ge 0 \} - \{ 0 \} \subseteq \Bcal(\lambda), \\
\Ecal_i(X) &:= \{ e_i^d(x) : x \in X, d \ge 0 \} - \{ 0 \} \subseteq \Bcal(\lambda).
\end{align*}
\begin{defn}[\cite{Kas93}]
Let $\lambda \in P^+$ and $w \in W$ with $w = s_{i_k} \cdots s_{i_1}$ a reduced word. The \emph{Demazure crystal} $\Bcal_w(\lambda)$ is:
\[
\Bcal_w(\lambda) = \Fcal_{i_k} \cdots \Fcal_{i_1}(\{ b_\lambda \}),
\]
where $b_\lambda$ is the highest weight element of $\Bcal(\lambda)$.
\end{defn}
Although it is not obvious from the definition, the set $\Fcal_{i_k} \cdots \Fcal_{i_1}(\{ b_\lambda \})$ is independent of the choice of reduced word for $w$, so $\Bcal_w(\lambda)$ is well-defined (see, for instance, \cite{Kas93} or \cite{BS} Theorem 13.5). Strictly speaking, $\Fcal_{i_k} \cdots \Fcal_{i_1}(\{ b_\lambda \})$ determines the underlying vertex set, and $\Bcal_w(\lambda)$ is the corresponding induced subgraph of $\Bcal(\lambda)$.

The element $b_{w \cdot \lambda} := f_{i_k}^* \cdots f_{i_1}^*(b_\lambda) \in \Bcal_w(\lambda)$ has $\mathrm{wt}(b_{w \cdot \lambda}) = w \cdot \lambda$, and $b_{w \cdot \lambda}$ is the \emph{unique} element of its weight in $\Bcal(\lambda)$. As for Demazure modules, such elements are called \emph{extremal weight elements}. Note that, if $w \cdot \lambda = w' \cdot \lambda$, then $\Bcal_w(\lambda) = \Bcal_{w'}(\lambda)$, so the Demazure crystal $\Bcal_w(\lambda)$ is determined solely by the left coset of $W / W_\lambda$ containing $w$. Thus we will often assume for simplicity that $w \in W^\lambda$ is a minimal-length coset representative.

\begin{thm}[\cite{Kas93}]\label{thm:dem-def} Demazure crystals satisfy the following properties:
\begin{enumerate}
\item $\Ecal_i(\Bcal_w(\lambda)) \subseteq \Bcal_w(\lambda)$ for all $i \in I$,
\item if $s_iw < w$, then
\[
\Bcal_w(\lambda) = \{ f_i^k(x) : x \in \Bcal_{s_iw}(b_\lambda), k \ge 0, e_i(x) = 0 \} - \{ 0 \},
\]
so that in particular $\Fcal_i(\Bcal_w(\lambda)) = \Bcal_w(\lambda)$, and
\item for any $i$-string $S$ in $\Bcal_w(\lambda)$,
\[
S \cap \Bcal_w(\lambda) \in \{ \emptyset, \{ u_S \}, S \},
\]
where $u_S$ is the highest weight element of $S$.
\end{enumerate}
\end{thm}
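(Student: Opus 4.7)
The plan is to prove the three properties simultaneously by induction on $\ell(w)$. The base case $w = e$ is immediate: since $\Bcal_e(\lambda) = \{b_\lambda\}$, property (1) holds because $e_i(b_\lambda) = 0$ for all $i$, property (3) holds trivially (every $i$-string either misses $\{b_\lambda\}$ or is met only in its top element $b_\lambda$), and (2) is vacuous. For the inductive step, fix a reduced word $w = s_{i_k} \cdots s_{i_1}$, set $i := i_k$ and $w' := s_i w$, so that $\ell(w') = \ell(w) - 1$ and $\Bcal_w(\lambda) = \Fcal_i(\Bcal_{w'}(\lambda))$ by definition; assume (1)--(3) hold for $w'$.

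The first step is to derive property (2) for $w$ from property (3) for $w'$. The inductive hypothesis says every $i$-string $S$ meets $\Bcal_{w'}(\lambda)$ in $\emptyset$, $\{u_S\}$, or $S$; in the latter two cases $u_S \in \Bcal_{w'}(\lambda)$, and applying $\Fcal_i$ fills out each such $i$-string completely. Hence
\[
\Bcal_w(\lambda) \;=\; \bigcup_{\,u_S \in \Bcal_{w'}(\lambda)\,} S \;=\; \{ f_i^k(x) : x \in \Bcal_{w'}(\lambda),\, e_i(x) = 0,\, k \ge 0 \} - \{0\},
\]
which is exactly the formula in (2), and $\Fcal_i(\Bcal_w(\lambda)) = \Bcal_w(\lambda)$ follows because $\Bcal_w(\lambda)$ is a union of complete $i$-strings. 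This description also immediately yields the $j = i$ case of (1) (closure under $e_i$) and the $i$-string case of (3) for $w$.

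The main obstacle is establishing (1) for $e_j$ and (3) for $j$-strings when $j \ne i$. My strategy is a local analysis on the rank-two sub-crystals generated by $e_i, f_i, e_j, f_j$, of which there are only finitely many isomorphism types (controlled by the Cartan pairings $\langle \alpha_i, \alpha_j^\vee \rangle$ and $\langle \alpha_j, \alpha_i^\vee \rangle$, or equivalently via the Stembridge axioms and the rank-two tensor-product crystal rule). For each such type, one checks that if a subset $X \subseteq \Bcal(\lambda)$ meets every $j$-string in $\emptyset$, $\{u_S\}$, or $S$, then so does $\Fcal_i(X)$: whenever $\Fcal_i$ adjoins new elements by completing a partial $i$-string descending from some $x \in X$, the rank-two combinatorics forces these newly-added elements to assemble into $j$-strings that are either already fully contained in $X$ or are themselves completed within $\Fcal_i(X)$, and similarly for the "singleton-top" case. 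This finite combinatorial verification on the rank-two types is the technical heart of the argument, and is essentially the content of Kashiwara's original proof; it simultaneously delivers (3) for $j$-strings with $j \ne i$ and the $j \ne i$ half of (1), completing the induction.
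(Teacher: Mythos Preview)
The paper does not contain its own proof of this theorem: it is quoted verbatim as a result of Kashiwara \cite{Kas93} and no argument is given. So there is nothing in the paper to compare your proposal against.

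That said, a brief remark on the proposal itself. Your induction framework and the derivation of (2) from the inductive instance of (3) are correct and standard. The real content, as you acknowledge, is the claim that applying $\Fcal_i$ to a subset satisfying the string property for all $j$ yields another such subset; you defer this to ``a finite combinatorial verification on the rank-two types'' and assert it is ``essentially the content of Kashiwara's original proof.'' This last sentence is not accurate: Kashiwara's argument in \cite{Kas93} is algebraic, passing through the compatibility of Demazure modules with global crystal bases, not a case analysis on rank-two crystals. A purely combinatorial proof along the lines you sketch is possible (see for instance the treatment in Bump--Schilling via Stembridge's local axioms), but the Stembridge axioms only directly characterize crystals in simply-laced types, so in the non-simply-laced case one needs an additional ingredient (e.g.\ virtualization or folding) which your sketch does not mention. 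As written, your proposal correctly identifies the architecture of a proof but leaves the genuinely hard step as an unverified assertion with an incorrect attribution.
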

Note that we only have $\Fcal_i(\Bcal_w(\lambda)) \subseteq \Bcal_w(\lambda)$ when $s_iw < w$; this may not be the case when $s_iw > w$ (in fact it \emph{never} is, so long as $w,s_iw$ lie in different cosets of $W / W_\lambda$). However, $\Bcal_w(\lambda)$ is closed under $\Ecal_i$ for all $i \in I$. The third property of Theorem \ref{thm:dem-def} is noticeably different from the first two --- we call a subset which satisfies this condition \emph{extremal} --- and it will receive a closer look in Section \ref{sec:extremal}.

One may realize a Demazure crystal of classical type combinatorially as the truncation of a tableau highest weight crystal, but there is also a dedicated tableau model for type $A$ Demazure crystals which uses objects called key tableaux, due to \cite{AS}.

\section{Demazure atoms}\label{sec:atoms}
Just as Demazure modules are naturally filtered by Bruhat order, there is a well-known analogous result for Demazure crystals:
\begin{prop}
For any $\lambda \in P^+$ and $v, w \in W^\lambda$, we have the containment $\Bcal_v(\lambda) \subseteq \Bcal_w(\lambda)$ of Demazure crystals if and only if $v \le w$ in Bruhat order.
\end{prop}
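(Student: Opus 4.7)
My plan is to handle the two implications separately. The forward direction is essentially combinatorial, while the backward direction needs an induction on $\ell(w)$ that is best phrased in terms of the extremal weight elements.

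For the forward direction, I would invoke the subword characterization of Bruhat order. Fix a reduced word $w = s_{i_k} \cdots s_{i_1}$ and choose indices $1 \le j_1 < \cdots < j_r \le k$ so that $s_{i_{j_r}} \cdots s_{i_{j_1}}$ is a reduced word for $v$. The operators $\Fcal_i$ are both monotone (preserving $\subseteq$) and expansive ($X \subseteq \Fcal_i(X)$, from the $d = 0$ term in the definition). Inserting the $k - r$ omitted operators one at a time into the expression for $\Bcal_v(\lambda)$ thus only enlarges the set, yielding
\[
\Bcal_v(\lambda) = \Fcal_{i_{j_r}} \cdots \Fcal_{i_{j_1}}(\{b_\lambda\}) \subseteq \Fcal_{i_k} \cdots \Fcal_{i_1}(\{b_\lambda\}) = \Bcal_w(\lambda).
\]

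For the backward direction, I would first reduce to the sharper lemma that $b_{v \cdot \lambda} \in \Bcal_w(\lambda)$ if and only if $v \le w$. Since $b_{v \cdot \lambda} \in \Bcal_v(\lambda)$, the hypothesis $\Bcal_v(\lambda) \subseteq \Bcal_w(\lambda)$ gives $b_{v \cdot \lambda} \in \Bcal_w(\lambda)$, so the lemma concludes $v \le w$. The ``if'' direction of the lemma follows from what we already proved; for ``only if,'' induct on $\ell(w)$. The base case $w = e$ holds because $\Bcal_e(\lambda) = \{b_\lambda\}$, and $v \in W^\lambda$ with $v \cdot \lambda = \lambda$ forces $v = e$. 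For the step, pick $i$ with $s_iw < w$ and set $u = s_iw$; Theorem \ref{thm:dem-def}(2) gives
\[
\Bcal_w(\lambda) = \{ f_i^k(x) : x \in \Bcal_u(\lambda),\ e_i(x) = 0,\ k \ge 0\} - \{0\}.
\]
Writing $d = \varepsilon_i(b_{v \cdot \lambda})$ and $x = e_i^d(b_{v \cdot \lambda}) \in \Bcal_u(\lambda)$, the case $d = 0$ is immediate: $b_{v \cdot \lambda} \in \Bcal_u(\lambda)$ and induction gives $v \le u \le w$.

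The crux is the case $d \ge 1$. Here I would invoke the standard property that any extremal weight element in $\Bcal(\lambda)$ satisfies $\varepsilon_i \cdot \varphi_i = 0$ (a consequence of Kashiwara's Weyl-group action on extremal vectors). Since $\varepsilon_i(b_{v \cdot \lambda}) = d \ge 1$, this forces $\varphi_i(b_{v \cdot \lambda}) = 0$, hence $\langle v \cdot \lambda, \alpha_i^\vee \rangle = -d$, and so $x$ has weight $v \cdot \lambda + d \alpha_i = s_iv \cdot \lambda$. By uniqueness of this weight in $\Bcal(\lambda)$, we get $x = b_{s_iv \cdot \lambda}$, and applying induction to $u$ yields $s_iv \le u = s_iw$. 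The Bruhat lifting property, together with the fact that $\langle v \cdot \lambda, \alpha_i^\vee \rangle < 0$ forces $s_iv < v$, upgrades this to $v \le w$. The main obstacle is precisely this last case: cleanly verifying $s_iv, s_iw \in W^\lambda$ and invoking the lifting property in the quotient $W / W_\lambda$, rather than directly in $W$.
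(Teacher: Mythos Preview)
The paper does not actually prove this proposition; it is stated as a well-known fact at the start of Section~\ref{sec:atoms} and no argument is given. Your proof is correct and supplies what the paper omits.

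The implication $v \le w \Rightarrow \Bcal_v(\lambda) \subseteq \Bcal_w(\lambda)$ via monotonicity and expansiveness of the $\Fcal_i$ operators is clean. For the converse, the reduction to the lemma ``$b_{v\cdot\lambda}\in\Bcal_w(\lambda)\Rightarrow v\le w$'' and the induction on $\ell(w)$ both go through; the points you flag as obstacles are routine. That $u=s_iw\in W^\lambda$ when $w\in W^\lambda$ and $s_iw<w$ is exactly what the paper verifies at the start of the proof of Theorem~\ref{thm:atomic-op}; the check that $s_iv\in W^\lambda$ when $s_iv<v$ is analogous (if $s_iv\notin W^\lambda$ then $s_iv(\alpha_j)<0$ for some $s_j\in W_\lambda$, forcing $v(\alpha_j)=\alpha_i$ and hence $s_iv=vs_j>v$, a contradiction). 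The lifting step needs only the ordinary Bruhat lifting lemma in $W$, not in the quotient: from $s_iv\le s_iw < w$ one has $s_iv\le w$, and then lifting (using $s_iw<w$) yields $v = s_i(s_iv) \le w$. The invocation of $\varepsilon_i\cdot\varphi_i=0$ for extremal weight elements is also standard.
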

Because of this containment, it is natural to study the minimal nonintersecting pieces of Demazure crystals, which was initiated by Lascoux-Sch\"{u}tzenberger on the level of characters in type $A$. After Mason \cite{Mason}, these pieces are referred to as \emph{Demazure atoms}.
\begin{defn}[\cite{Kas02}]
Let $\lambda \in P^+$ and $w \in W^\lambda$. The \emph{(crystal) Demazure atom} $\mathcal{A}_w(\lambda)$ is
\[
\mathcal{A}_w(\lambda) = \Bcal_w(\lambda) \setminus \bigcup_{v < w} \Bcal_v(\lambda).
\]
\end{defn}
Intuitively, the crystal Demazure atom $\mathcal{A}_w(\lambda)$ consists of those elements of $\Bcal_w(\lambda)$ which were added at the ``last step", i.e. those elements which are not contained in any strictly smaller Demazure crystal. Much of the literature concerning Demazure atoms is on the level of characters; in type $A$, the characters of Demazure crystals and Demazure atoms form combinatorially interesting bases for the polynomial ring $\ZZ[x_1,\ldots,x_n]$, and they were studied extensively in \cite{LS} and \cite{Mason}. Following \cite{BBBG}, we will refer to the character of a crystal Demazure atom as a \emph{polynomial Demazure atom} when there may be confusion.

We now briefly review this theory in type $A_{n-1}$, where $W = S_n$, $P = \ZZ^n / \langle (1,\ldots,1) \rangle$, and $\mathcal{O} = \ZZ[t_1,\ldots,t_n]$. For $1 \le i \le n-1$, the Demazure operator $D_i$ specializes to the \emph{divided difference operator} $\pi_i$, which acts on $f \in \ZZ[t_1,\ldots,t_n]$ via:
\[
\pi_i(f) = \frac{t_if - s_i \cdot (t_if)}{t_i - t_{i+1}} \in \ZZ[t_1,\ldots,t_n],
\]
where $s_i \cdot f$ is obtained by swapping $t_i$ and $t_{i+1}$ in $f$. Lascoux-Sch\"{u}tzenberger define a related operator to obtain polynomial Demazure atoms:
\begin{thm}[\cite{LS}]
For $1 \le i \le n-1$, let $\theta_i$ denote the operator on $\ZZ[t_1,\ldots,t_n]$ given by
\[
\theta_i(f) = \pi_i(f) - f.
\]
Then for $\lambda \in P^+$ and $w \in W^\lambda$ with $w = s_{i_k} \cdots s_{i_1}$ a reduced word, the (polynomial) Demazure atom is:
\[
\mathrm{char}(\mathcal{A}_w(\lambda)) = \theta_{i_k} \cdots \theta_{i_1}(t_1^{\lambda_1} \cdots t_n^{\lambda_n}).
\]
\end{thm}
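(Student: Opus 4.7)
The plan is to induct on $\ell(w)$, with base case $w = e$ giving $\mathcal{A}_e(\lambda) = \{b_\lambda\}$ of character $t^\lambda = t_1^{\lambda_1} \cdots t_n^{\lambda_n}$. For the inductive step, I will prove that $\mathrm{char}(\mathcal{A}_{s_iw}(\lambda)) = \theta_i(\mathrm{char}(\mathcal{A}_w(\lambda)))$ whenever $s_iw > w$; iterating along any reduced word for $w$ then yields the formula (and as a byproduct establishes that the right-hand side is independent of the chosen reduced word).

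The key computation is the action of $\theta_i$ on $i$-strings: if $S$ is an $i$-string with top $u_S$, a direct expansion of $\pi_i$ in terms of the variables $t_j$ gives $\pi_i(t^{\mathrm{wt}(u_S)}) = \mathrm{char}(S)$, and together with the idempotency $\pi_i(\mathrm{char}(S)) = \mathrm{char}(S)$ this yields $\theta_i(t^{\mathrm{wt}(u_S)}) = \mathrm{char}(S \setminus \{u_S\})$ and $\theta_i(\mathrm{char}(S)) = 0$. Consequently, the inductive step reduces to proving the following structural claim: under $s_iw > w$, every $i$-string $S$ of length at least two meets $\mathcal{A}_w(\lambda)$ in either $\emptyset$ or $\{u_S\}$, and $\mathcal{A}_{s_iw}(\lambda)$ is the disjoint union of the tails $S \setminus \{u_S\}$ taken over those strings with $u_S \in \mathcal{A}_w(\lambda)$. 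Summing the $i$-string contributions using the two identities above then matches both sides.

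To establish this structural claim I will use extremality of Demazure crystals. For a non-top element of an $i$-string $S$, Theorem \ref{thm:dem-def}(3) forces it to lie in $\Bcal_v(\lambda)$ if and only if all of $S$ does, so every element of $S \setminus \{u_S\}$ belongs to a single atom $\mathcal{A}_{v_1}(\lambda)$, where $v_1 \in W^\lambda$ is the minimum index with $S \subseteq \Bcal_{v_1}(\lambda)$. Letting $v_0$ be the analogous minimum for $u_S$, I expect to prove---using the recursive description $\Bcal_{s_iv_0}(\lambda) = \Fcal_i(\Bcal_{v_0}(\lambda))$ afforded by Theorem \ref{thm:dem-def}(2)---that $v_1 = s_iv_0$ when $s_iv_0 > v_0$ and $v_1 = v_0$ otherwise. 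Imposing the hypothesis $s_iw > w$, the hypothetical cases $\mathcal{A}_w(\lambda) \cap S \in \{S \setminus \{u_S\}, S\}$ each force $s_iw \le w$ and are thus excluded, while the surviving case $u_S \in \mathcal{A}_w(\lambda)$ is matched exactly with $S \setminus \{u_S\} \subseteq \mathcal{A}_{s_iw}(\lambda)$. The main obstacle, I expect, is pinning down the lower bound $v_1 \ge s_iv_0$ (in the case $s_iv_0 > v_0$): the upper bound is immediate from $\Fcal_i$, but ruling out intermediate Bruhat indices requires leveraging the fact that the top $u_S$, being killed by $e_i$, cannot be brought into any smaller Demazure crystal by an $\Fcal_i$ operation alone. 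Once this structural input is secured, the inductive step follows by a direct character computation.
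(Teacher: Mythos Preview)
The paper does not prove this statement directly; it is cited from \cite{LS} as background. What the paper \emph{does} prove is a crystal-level generalization in arbitrary finite type (Theorem~\ref{thm:atomic-op} and Corollary~\ref{cor:recursive}): the atomic operators $\Acal_i(X)=\Fcal_i(X)\setminus X$ satisfy $\Acal_{i_k}\cdots\Acal_{i_1}(b_\lambda)=\mathcal{A}_w(\lambda)$, and moreover every $x\in\mathcal{A}_w(\lambda)$ has $e_i(x)=0$ whenever $s_iw>w$ with $s_iw\in W^\lambda$ (Equation~\eqref{eq:intermediate}). Taking characters in type $A$ and using your $i$-string identities $\theta_i(t^{\mathrm{wt}(u_S)})=\mathrm{char}(S\setminus\{u_S\})$ and $\theta_i(\mathrm{char}(S))=0$ recovers precisely your inductive step $\theta_i(\mathrm{char}(\mathcal{A}_w(\lambda)))=\mathrm{char}(\mathcal{A}_{s_iw}(\lambda))$. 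So your plan is correct and is in essence the same argument as the paper's, passed through the character map.

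The organizational difference is that the paper establishes the set equality $\Acal_i(\mathcal{A}_{s_iw}(\lambda))=\mathcal{A}_w(\lambda)$ by a direct double inclusion on elements, whereas you repackage the same content string-by-string as a dichotomy $v_1\in\{v_0,s_iv_0\}$. Your flagged obstacle---the lower bound $v_1\ge s_iv_0$ when $s_iv_0>v_0$---is exactly the nontrivial inclusion in the paper's argument, dispatched there by a three-case Bruhat analysis (cases i--iii in the proof of Theorem~\ref{thm:atomic-op}, using the subword and lifting properties). Your heuristic for this step (that $u_S$, being killed by $e_i$, ``cannot be brought into a smaller Demazure crystal by an $\Fcal_i$ operation alone'') is correct in spirit but not yet a proof: the delicate case is $S\subseteq\Bcal_v(\lambda)$ for some $v$ with $s_iv>v$, since $\Bcal_v(\lambda)$ can contain full $i$-strings of length $\ge 2$ despite not being $\Fcal_i$-closed. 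A Bruhat-order argument of the same shape as the paper's is what closes this.
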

The Demazure character is often denoted by $\kappa_\beta = \mathrm{char}(\Bcal_w(\lambda))$, where $\beta = w \cdot \lambda$ is a weak composition, and called a \emph{key polynomial}. We can similarly index Demazure atoms by weak compositions in this case, and they are denoted by $\mathfrak{A}_\beta = \mathrm{char}(\mathcal{A}_w(\lambda))$. In fact, in type $A_{n-1}$, key polynomials $\{ \kappa_\beta \}_\beta$ and polynomial Demazure atoms $\{ \mathfrak{A}_\beta \}_\beta$ both form bases for $\ZZ[t_1,\ldots,t_n]$ as $\beta$ ranges over the weak compositions of length $n$.

Lascoux and Sch\"{u}tzenberger \cite{LS} define a map $K_+ : SSYT_n(\lambda) \to S_n$ called the \emph{(right) key map} which associates to any $SSYT$ an element $K_+(T)$ of the Weyl group, and Jacon-Lecouvey \cite{JL} generalize the key map to $K_+ : \Bcal(\lambda) \to W$ for $\Bcal(\lambda)$ the highest weight crystal associated to any symmetrizable Kac-Moody algebra. Santos \cite{Santos} also defines the type $C$ key map with a distinct approach using the symplectic plactic monoid. The shared property of these maps is that they determine membership in a given Demazure atom:
\begin{thm}[\cite{LS},\cite{JL}, \cite{Santos}]\label{thm:dem-into-atoms}
For any $\lambda \in P^+$ and $w \in W^\lambda$, we have
\begin{align*}
\Bcal_w(\lambda) &= \{ b \in \Bcal(\lambda) : K_+(b) \le w \}, \\
\mathcal{A}_w(\lambda) &= \{ b \in \Bcal(\lambda) : K_+(b) = w \}.
\end{align*}
In particular,
\[
\Bcal_w(\lambda) = \bigsqcup_{v \le w, v \in W^\lambda} \mathcal{A}_v(\lambda).
\]
\end{thm}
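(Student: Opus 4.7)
The plan is to prove the three statements in order, with the first (the characterization of $\Bcal_w(\lambda)$) being the main work; the second (the characterization of $\mathcal{A}_w(\lambda)$) then falls out by set subtraction, and the disjoint union follows as a corollary. I would proceed by induction on $\ell(w)$. The base case $w = e$ is immediate: $\Bcal_e(\lambda) = \{b_\lambda\}$, and since $b_\lambda$ is the unique element of weight $\lambda$, any reasonable key map must satisfy $K_+(b_\lambda) = e$, so both sides of the first claimed equality coincide.

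For the inductive step, fix a reduced word $w = s_{i_k} \cdots s_{i_1}$ and set $w' = s_{i_{k-1}} \cdots s_{i_1}$, so that $\Bcal_w(\lambda) = \Fcal_{i_k}(\Bcal_{w'}(\lambda))$ by definition. The key technical input is the compatibility of $K_+$ with the crystal lowering operators, namely:
\[
\text{if } f_i^r(b) \ne 0 \text{ for } r \ge 0, \text{ then } K_+(f_i^r(b)) \in \{K_+(b),\, s_i K_+(b)\},
\]
with $s_i K_+(b) > K_+(b)$ whenever the second possibility occurs. Granting this, the forward inclusion $\Bcal_w(\lambda) \subseteq \{b : K_+(b) \le w\}$ is straightforward: any $c \in \Bcal_w(\lambda)$ has the form $f_{i_k}^r(b)$ for some $b \in \Bcal_{w'}(\lambda)$, the inductive hypothesis yields $K_+(b) \le w'$, and the compatibility statement together with the subword property of Bruhat order gives $K_+(c) \le s_{i_k} w' = w$. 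For the reverse inclusion, I would combine Theorem \ref{thm:dem-def}(2), which describes $\Bcal_w(\lambda)$ as the union of $i_k$-strings topped by elements of $\Bcal_{w'}(\lambda)$ with $e_{i_k} = 0$, with a dual version of the compatibility statement that lets me lift any $b$ with $K_+(b) \le w$ back into $\Bcal_{w'}(\lambda)$ via $e_{i_k}^*$.

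Given the first characterization, the second falls out of the definition of $\mathcal{A}_w(\lambda)$:
\[
\mathcal{A}_w(\lambda) = \{b : K_+(b) \le w\} \setminus \bigcup_{v < w} \{b : K_+(b) \le v\} = \{b : K_+(b) = w\},
\]
since $K_+(b) \le w$ with $K_+(b) \ne w$ forces $K_+(b) < w$. The final disjoint union then follows immediately: every $b \in \Bcal_w(\lambda)$ has a unique key $K_+(b)$, and (after verifying $K_+(b) \in W^\lambda$, which should reduce to the observation that $\Bcal_w(\lambda) = \Bcal_{w'}(\lambda)$ whenever $w, w'$ lie in the same coset of $W/W_\lambda$) belongs to exactly one atom $\mathcal{A}_{K_+(b)}(\lambda)$.

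The main obstacle is the crystal-operator compatibility statement for $K_+$, which is essentially the content of each of the three cited constructions and is where the three different approaches diverge: Lascoux--Sch\"utzenberger read $K_+$ off a right-key tableau in type $A$, Jacon--Lecouvey use Littelmann paths and an extremal-weight argument valid in all symmetrizable Kac--Moody types, and Santos works inside the symplectic plactic monoid in type $C$. In a self-contained write-up I would pick one framework (the Littelmann-path approach of Jacon--Lecouvey is the most general and most naturally compatible with the abstract crystal structure) and verify the compatibility statement there; the other two then follow by comparing their key maps to this one on the shared domains.
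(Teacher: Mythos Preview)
The paper does not supply its own proof of this theorem: it is stated as a cited result from \cite{LS}, \cite{JL}, and \cite{Santos}, with no accompanying argument, and is then used as a black box (e.g.\ in the proof of Theorem~\ref{thm:ext-union}). There is therefore nothing in the paper to compare your proposal against.

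As a standalone sketch your outline is reasonable, and you correctly identify that the entire burden rests on the compatibility of $K_+$ with the crystal operators --- a fact you explicitly defer to the cited sources. One structural remark worth keeping in mind: in some of the cited treatments the map $K_+$ is effectively \emph{defined} so that $K_+(b)$ is the Bruhat-minimal $w \in W^\lambda$ with $b \in \Bcal_w(\lambda)$, in which case the first two displayed equalities are tautologies and the genuine content is that such a unique minimum exists (equivalently, that $\Bcal_v(\lambda) \cap \Bcal_w(\lambda)$ is again a Demazure crystal). Your inductive approach via an operator-compatibility lemma is the route one takes when $K_+$ is instead given by an independent combinatorial recipe (right-key tableaux in type $A$, symplectic keys in type $C$), so which argument counts as ``the'' proof depends on which of the three cited constructions you take as the primary definition of $K_+$.
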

Taking $w = w_0$, the preceding theorem shows that the set of crystal Demazure atoms $\{ \mathcal{A}_w(\lambda) : w \in W^\lambda \}$ are pairwise disjoint for any $\lambda \in P^+$.

The key map is profoundly useful in that it gives a uniform way to determine, for any given crystal element, in which Demazure atom it resides. In the following section, we approach the problem from the other direction, and provide a method for recursively generating the entire Demazure atom.

\subsection{Atomic operators}
Fix $\lambda \in P^+$ for the remainder of the section. The results of this section hold in arbitrary finite type, unless otherwise indicated.
\begin{defn}
For any $i \in I$ and $X \subseteq \Bcal(\lambda)$, define
\[
\Acal_i(X) = \Fcal_i(X) \setminus X.
\]
We call $\Acal_i$ an \emph{atomic operator}.
\end{defn}

In type $A$, these operators are equivalent to the map $\overline{\pi}_i$ defined on the plactic monoid in \cite{LS}, but we define them for any highest weight crystal of finite type. The $\Acal_i$'s do not, in general, satisfy the braid relations. However, they do in the following special case.

\begin{thm}\label{thm:atomic-op}
For $w \in W^\lambda$ with $w = s_{i_k} \cdots s_{i_1}$ a reduced word, we have
\[
\Acal_{i_k} \cdots \Acal_{i_1}(b_\lambda) = \mathcal{A}_w(\lambda).
\]
In particular, if $w = s_{j_k} \cdots s_{j_1}$ is another reduced word, then
\[
\Acal_{i_k} \cdots \Acal_{i_1}(b_\lambda) = \Acal_{j_k} \cdots \Acal_{j_1}(b_\lambda).
\]
\end{thm}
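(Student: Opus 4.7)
The plan is to induct on $k = \ell(w)$. The base case $k = 0$ gives $w = e$, and both sides equal $\{b_\lambda\}$. For the inductive step, set $i = i_k$ and $w' = s_{i_{k-1}} \cdots s_{i_1}$, so that $w = s_i w'$ with $\ell(w') = k-1$. A short length argument shows $w' \in W^\lambda$: if some $u \in W_\lambda \setminus \{e\}$ satisfied $\ell(w' u) < \ell(w')$, then $\ell(w u) \le \ell(w' u) + 1 < \ell(w)$, contradicting $w \in W^\lambda$. By the inductive hypothesis $\Acal_{i_{k-1}} \cdots \Acal_{i_1}(b_\lambda) = \mathcal{A}_{w'}(\lambda)$, so the theorem reduces to
\[
\Acal_i\bigl(\mathcal{A}_{w'}(\lambda)\bigr) = \mathcal{A}_w(\lambda).
\]

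I would prove this equality by analyzing how both sides intersect each $i$-string $S$ in $\Bcal(\lambda)$. Since $s_i w < w$, Theorem~\ref{thm:dem-def}(2) gives $\Bcal_w(\lambda) = \Fcal_i(\Bcal_{w'}(\lambda))$: every $i$-string meeting $\Bcal_w(\lambda)$ lies wholly in $\Bcal_w(\lambda)$ and has its top $u_S$ in $\Bcal_{w'}(\lambda)$. Extremality (Theorem~\ref{thm:dem-def}(3)), applied to $\Bcal_{w'}(\lambda)$ and to each smaller $\Bcal_v(\lambda)$ with $v < w'$ in $W^\lambda$, shows $S \cap \mathcal{A}_{w'}(\lambda) \in \{\emptyset, \{u_S\}, S \setminus \{u_S\}, S\}$, and computing $\Fcal_i(X) \setminus X$ in each of these cases yields
\[
S \cap \Acal_i\bigl(\mathcal{A}_{w'}(\lambda)\bigr) = \begin{cases} S \setminus \{u_S\} & \text{if } S \cap \mathcal{A}_{w'}(\lambda) = \{u_S\}, \\ \emptyset & \text{otherwise}. \end{cases}
\]
A parallel analysis of the right-hand side, using $u_S \in \Bcal_{w'}(\lambda)$ and $w' < w$ to conclude $u_S \notin \mathcal{A}_w(\lambda)$, shows that $S \cap \mathcal{A}_w(\lambda) = S \setminus \{u_S\}$ precisely when no $v < w$ in $W^\lambda$ satisfies $S \subseteq \Bcal_v(\lambda)$, and is $\emptyset$ otherwise.

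The theorem therefore reduces to the equivalence
\[
S \cap \mathcal{A}_{w'}(\lambda) = \{u_S\} \quad \Longleftrightarrow \quad \text{no } v < w,\ v \in W^\lambda,\ \text{satisfies } S \subseteq \Bcal_v(\lambda),
\]
which I expect to be the main obstacle. The forward direction invokes the lifting property of Bruhat order: since $s_i w < w$, any $v < w$ has either $v \le w'$ or $s_i v \le w'$; combining this with closure of Demazure crystals under $\Ecal_i$ (and under $\Fcal_i$ when $s_i v < v$) forces a hypothetical such $v$ to contradict either $S \cap \Bcal_{w'}(\lambda) = \{u_S\}$ or $u_S \in \mathcal{A}_{w'}(\lambda)$. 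The reverse direction is the contrapositive: if $u_S \in \Bcal_u(\lambda)$ for some $u < w'$ in $W^\lambda$, then one takes $v = u$ when $s_i u < u$ (so $S \subseteq \Bcal_u(\lambda)$ by closure under $\Fcal_i$) or $v = s_i u$ when $s_i u > u$ (so $S \subseteq \Bcal_{s_i u}(\lambda)$ by Theorem~\ref{thm:dem-def}(2)); in either case $v < w$ follows from lifting applied to $u < w'$, after replacing $v$ by its minimum-length representative in $W^\lambda$ if necessary, which preserves the associated Demazure crystal. The final ``in particular'' clause is then immediate, since the left-hand side equals $\mathcal{A}_w(\lambda)$ independent of the reduced word chosen.
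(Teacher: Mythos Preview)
Your proof is correct and shares its skeleton with the paper's: both induct on $\ell(w)$, reduce to showing $\Acal_i(\mathcal{A}_{w'}(\lambda)) = \mathcal{A}_w(\lambda)$ for $w' = s_iw < w$, and then invoke extremality (Theorem~\ref{thm:dem-def}(3)), closure under $\Ecal_i$ and (conditionally) $\Fcal_i$, and the Bruhat lifting/subword property. The difference is organizational. The paper argues element-by-element: for $x \in \Acal_i(\mathcal{A}_{w'}(\lambda))$ it assumes $x \in \Bcal_v(\lambda)$ for some minimal $v < w$ and derives a contradiction by casing on whether $v = w'$, $v < w'$, or $v \not\le w'$; for the reverse inclusion it shows directly that $e_i^*(y) \in \mathcal{A}_{w'}(\lambda)$ for each $y \in \mathcal{A}_w(\lambda)$. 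You instead decompose both sides along $i$-strings, reducing everything to the single equivalence ``$S \cap \mathcal{A}_{w'}(\lambda) = \{u_S\}$ iff no $v < w$ has $S \subseteq \Bcal_v(\lambda)$''. Your packaging is a bit cleaner conceptually---it makes transparent that $\Acal_i$ acts stringwise and isolates exactly one Bruhat-order statement to verify---while the paper's version is more self-contained, needing only the subword characterization of Bruhat order rather than the lifting property per se. One small point to make explicit: your characterization of $S \cap \mathcal{A}_w(\lambda)$ tacitly assumes $S \subseteq \Bcal_w(\lambda)$ (you invoke $u_S \in \Bcal_{w'}(\lambda)$); for strings disjoint from $\Bcal_w(\lambda)$ both sides are trivially empty, so this is harmless but worth stating.
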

\begin{proof}
Proceed by induction on $k = \ell(w)$. When $k = 1$, we have $\Acal_i(b_\lambda) = \Fcal_i(b_\lambda) \setminus \{ b_\lambda \} = \Bcal_{s_i}(\lambda) - \Bcal_{id}(\lambda)$, and the result follows.

Now let $k > 1$, and select $i \in I$ such that $s_iw < w$. Then $s_iw \in W^\lambda$ since, if it were not, then $s_iw \cdot \lambda = w \cdot \lambda$, which would imply that $\Bcal_{s_iw}(\lambda) = \Bcal_w(\lambda)$, contradicting the assumption that $w$ is a minimal-length coset representative in $W / W_\lambda$. Since $s_iw < w$, we have a reduced word for $w$ of the form $s_i s_{i_{k-1}} \cdots s_{i_1}$, so that $s_{i_{k-1}} \cdots s_{i_1}$ is a reduced word for $s_iw$. By induction,
\[
\Acal_{i_{k-1}} \cdots \Acal_{i_1}(b_\lambda) = \mathcal{A}_{s_iw}(\lambda) =  \Bcal_{s_iw}(\lambda) \setminus \bigcup_{v < s_iw} \Bcal_v(\lambda).
\]
We then have
\[
\Acal_i \Acal_{i_{k-1}} \cdots \Acal_{i_1}(b_\lambda) = \Fcal_i\left(\mathcal{A}_{s_iw}(\lambda)\right) \setminus \mathcal{A}_{s_iw}(\lambda).
\]
Denote this set by $L$, and first notice that $L \subseteq \Fcal_i(\Bcal_{s_iw}(\lambda)) = \Bcal_w(\lambda)$. We aim to show that $L = \mathcal{A}_w(\lambda)$. Let $x \in L$, so that $x \not\in \mathcal{A}_{s_iw}(\lambda)$, but $e_i^*(x)$ is. Assume to the contrary that $x \not\in \mathcal{A}_w(b_\lambda)$, and pick $v < w$ such that $x \in \Bcal_v(\lambda)$ and $\ell(v)$ is minimized. Then either:
\begin{enumerate}
\item[i.] $v = s_iw$, in which case $x \in \Bcal_{s_iw}(\lambda) \setminus \bigcup_{v < s_iw} \Bcal_v(\lambda) = \mathcal{A}_{s_iw}(\lambda)$, which is a contradiction,
\item[ii.] $v < s_iw$, in which case $e_i^*(x) \in \Bcal_v(\lambda)$ by Theorem \ref{thm:dem-def}(1). But we assumed $e_i^*(x) \in \mathcal{A}_{s_iw}(\lambda)$ and $\mathcal{A}_{s_iw}(\lambda) \cap \Bcal_v(\lambda) = \emptyset$, so this is a contradiction,
\item[iii.] $v \not< s_iw$. We have $v < w$ since $L \subseteq \Bcal_w(b_\lambda)$, so this forces $s_iv < v$ for the following reason: every reduced word for $v$ is a subword of $s_i s_{i_{k-1}} \cdots s_{i_1}$, and by assumption, there is no reduced word for $v$ which is also a subword of $s_{i_{k-1}} \cdots s_{i_1}$. Thus $v$ must have a reduced word ending in $s_i$, i.e. $s_iv < v$. For this reason, we also have $s_iv < s_iw$. But this forces $e_i^*(x) \in \Bcal_{s_iv}(\lambda)$ by Theorem \ref{thm:dem-def}.2, which is again a contradiction since, by assumption, $e_i^*(x) \in \mathcal{A}_{s_iw}(\lambda)$ and $\mathcal{A}_{s_iw}(\lambda) \cap \Bcal_{s_iv}(\lambda) = \emptyset$.
\end{enumerate}
Thus $x \in \mathcal{A}_w(\lambda)$, so that $L \subseteq \mathcal{A}_w(\lambda)$.

Now assume $y \in \mathcal{A}_w(\lambda)$. Then $y \not\in \Bcal_{s_iw}(\lambda)$ since $s_iw < w$, so $y \not\in \mathcal{A}_{s_iw}(\lambda)$. Since $y \in \Bcal_w(\lambda)$ and $s_iw < w$, we have by Theorem \ref{thm:dem-def}(2) that $e_i^*(y) \in \Bcal_{s_iw}(\lambda)$. Now, it suffices to show that $e_i^*(y) \in \mathcal{A}_{s_iw}(\lambda)$, i.e. $e_i^*(y) \not\in \Bcal_v(\lambda)$ for any $v < s_iw$. If this were the case for some $v < s_iw$, then either $s_iv < v$ and $y \in \Fcal_i\Bcal_v(\lambda) \subseteq \Bcal_v(\lambda)$ by Theorem \ref{thm:dem-def}(2), or $v < s_iv$ and $y \in \Bcal_{s_iv}(\lambda)$. We have $v, s_iv < w$ in either case, so this is a contradiction since $y \in \Bcal_w(\lambda) \setminus \bigcup_{v < w} \Bcal_v(\lambda)$.

Thus $y \in L$ and $\mathcal{A}_w(\lambda) \subseteq L$; we conclude finally that the two sets are equal.
\end{proof}
For example, Figure \ref{fig:atom-decomp} shows how the type $A_2$ Demazure crystal $\Bcal_{s_1s_2}(3,2,0)$ decomposes into crystal Demazure atoms, using the atomic operators.
\begin{figure}\label{fig:atom-decomp}
  \begin{center}
  \begin{tikzpicture}
  \ytableausetup{boxsize=1em}
  \draw (0,0) node (320) {{\color{orange}$\begin{ytableau}
  {2} & {2} \\
  {1} & {1} & {1}
  \end{ytableau}$}};
  
  \draw (-1.5,-1.5) node (311) {{\color{darkpurple}$\begin{ytableau}
  {2} & {3} \\
  {1} & {1} & {1}
  \end{ytableau}$}};

  \draw (-3,-3) node (302) {{\color{darkpurple}$\begin{ytableau}
  {3} & {3} \\
  {1} & {1} & {1}
  \end{ytableau}$}};
  
  \draw (2.5,0) node (230) {{\color{teal}$\begin{ytableau}
  {2} & {2} \\
  {1} & {1} & {2}
  \end{ytableau}$}};
  
  \draw (1,-1.5) node (221) {{\color{magenta}$\begin{ytableau}
  {2} & {3} \\
  {1} & {1} & {2}
  \end{ytableau}$}};
  
  \draw (3.5,-1.5) node (131) {{\color{magenta}$\begin{ytableau}
  {2} & {3} \\
  {1} & {2} & {2}
  \end{ytableau}$}};
  
  \draw (-0.5,-3) node (212) {{\color{magenta}$\begin{ytableau}
  {3} & {3} \\
  {1} & {1} & {2}
  \end{ytableau}$}};
  
  \draw (2,-3) node (122) {{\color{magenta}$\begin{ytableau}
  {3} & {3} \\
  {1} & {2} & {2}
  \end{ytableau}$}};
  
  \draw (4.5,-3) node (032) {{\color{magenta}$\begin{ytableau}
  {3} & {3} \\
  {2} & {2} & {2}
  \end{ytableau}$}};
  
  \draw[->,color=darkblue] (320) -- (311) node [midway,above,darkblue] {${f_2}$};
  \draw[->,color=darkblue] (311) -- (302) [midway,above,darkblue];
  
  \draw[->,color=darkred] (320) -- (230) node [midway,above,darkred] {${f_1}$};
  
  \draw[->,color=darkgreen] (311) -- (221) [midway,above,darkred];
  \draw[->,color=darkgreen] (221) -- (131) [midway,above,darkred];
  
  \draw[->,color=darkred] (221) -- (212) [midway,above,darkblue];
  
  \draw[->,color=darkgreen] (302) -- (212) [midway,above,darkred];
  \draw[->,color=darkgreen] (212) -- (122) [midway,above,darkred];
  \draw[->,color=darkgreen] (122) -- (032) [midway,above,darkred];
  
  \draw[dashed] (2,0.8) -- (-2.5,-3.8);
  \draw[dashed] (-4,-0.7) -- (7,-0.7);
  
  \draw (-2,0) node {${\color{orange}\Acal_{id}(b_{320})}$};
  \draw (-4,-2) node {${\color{darkpurple}\Acal_{2}(b_{320})}$};
  \draw (4.25,0) node {${\color{teal}\Acal_{1}(b_{320})}$};
  \draw (6,-2) node {${\color{magenta}\Acal_{1}\Acal_{2}(b_{320})}$};
  
  \end{tikzpicture}
  \end{center}
  \caption{$\Bcal_{s_1s_2}(3,2,0) = {\color{orange}\Acal_{id}(b_{320})} \sqcup {\color{darkpurple}\Acal_{2}(b_{320})} \sqcup {\color{teal}\Acal_{1}(b_{320})} \sqcup {\color{magenta}\Acal_{1}\Acal_{2}(b_{320})}$}
\end{figure}

\begin{cor}\label{cor:recursive}
For $w \in W^\lambda,$ and $i \in I$ such that $s_iw < w$, we have
\[
\mathcal{A}_w(\lambda) = \Acal_i(\mathcal{A}_{s_iw}(\lambda)).
\]
In particular, for each $x \in \mathcal{A}_w(\lambda)$ and any reduced word $w = s_{i_k} \cdots s_{i_1}$, there exist $d_1, \ldots, d_k > 0$ such that:
\[
x = f_{i_k}^{d_k} \cdots f_{i_1}^{d_1}(b_\lambda),
\]
and for each $1 \le j \le k-1$,
\begin{equation}\label{eq:intermediate}
e_{i_j}(f_{i_{j-1}}^{d_{j-1}} \cdots f_{i_1}^{d_1}) = 0.
\end{equation}
\end{cor}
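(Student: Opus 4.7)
The first assertion $\mathcal{A}_w(\lambda) = \Acal_i(\mathcal{A}_{s_iw}(\lambda))$ is essentially immediate from Theorem \ref{thm:atomic-op}: pick any reduced word $s_{i_{k-1}} \cdots s_{i_1}$ for $s_iw$, which (since $s_iw < w$) extends to a reduced word $s_i \cdot s_{i_{k-1}} \cdots s_{i_1}$ for $w$. Applying Theorem \ref{thm:atomic-op} to both reduced words yields the equality.

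For the factorization of $x$, I would induct on $k = \ell(w)$; the base case $k = 0$ is trivial. For the inductive step, set $w' = s_{i_k}w$, which lies in $W^\lambda$ by the argument already given in the proof of Theorem \ref{thm:atomic-op}. The first assertion gives $\mathcal{A}_w(\lambda) = \Acal_{i_k}(\mathcal{A}_{w'}(\lambda))$, so we may write $x = f_{i_k}^{d_k}(z)$ for some $z \in \mathcal{A}_{w'}(\lambda)$ and some $d_k \ge 0$. The crux is to identify $z$ with $e_{i_k}^*(x)$, the highest weight element of the $i_k$-string $S$ containing $x$. Since $x \in \mathcal{A}_w(\lambda)$ and $w' < w$, we have $x \notin \Bcal_{w'}(\lambda)$, so $S \not\subseteq \Bcal_{w'}(\lambda)$; on the other hand, $z \in S \cap \Bcal_{w'}(\lambda) \neq \emptyset$. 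The extremality property Theorem \ref{thm:dem-def}(3), applied to $\Bcal_{w'}(\lambda)$ on the string $S$, then forces $S \cap \Bcal_{w'}(\lambda) = \{u_S\}$, whence $z = u_S = e_{i_k}^*(x)$. In particular, $e_{i_k}(z) = 0$ and $d_k = \varphi_{i_k}(z) > 0$.

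Applying the inductive hypothesis to $z \in \mathcal{A}_{w'}(\lambda)$ with the reduced word $s_{i_{k-1}} \cdots s_{i_1}$ of $w'$ yields a factorization $z = f_{i_{k-1}}^{d_{k-1}} \cdots f_{i_1}^{d_1}(b_\lambda)$ with $d_j > 0$ for $1 \le j \le k-1$ and the required intermediate vanishing conditions; prepending $f_{i_k}^{d_k}$ supplies the desired factorization of $x$, with the vanishing condition for the $k$-th step coming from $e_{i_k}(z) = 0$ just established. The only real obstacle is the extremality argument identifying $z$ with the top of its $i_k$-string -- without property (3) of Theorem \ref{thm:dem-def}, the preimage of $x$ under $\Acal_{i_k}$ in $\mathcal{A}_{w'}(\lambda)$ could in principle lie anywhere above $x$ on its $i_k$-string -- and once this is in hand, the remainder is routine bookkeeping on the induction.
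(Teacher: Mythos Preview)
Your argument is correct and follows the same inductive skeleton as the paper's proof: deduce the first assertion from Theorem~\ref{thm:atomic-op}, then induct on $\ell(w)$, peeling off the leftmost reflection. The one place you diverge is in justifying that the preimage $z$ of $x$ under $\Acal_{i_k}$ is the top $e_{i_k}^*(x)$ of its string. The paper simply asserts $e_{i_k}^*(x)\in\mathcal{A}_{s_{i_k}w}(\lambda)$, since this was already established (using Theorem~\ref{thm:dem-def}(2)) in the $\supseteq$ direction of the proof of Theorem~\ref{thm:atomic-op}. You instead give a self-contained argument via extremality (Theorem~\ref{thm:dem-def}(3)) of $\Bcal_{w'}(\lambda)$: since $x\notin\Bcal_{w'}(\lambda)$ but $z\in\Bcal_{w'}(\lambda)$ lies on the same $i_k$-string, the intersection must be the singleton $\{u_S\}$. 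Both routes are valid; yours is perhaps more transparent as it does not reach back into the internals of the earlier proof.

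One small slip: you write $d_k = \varphi_{i_k}(z)$, which would force $x$ to sit at the bottom of its $i_k$-string. That is not true in general; the correct identity is $d_k = \varepsilon_{i_k}(x)$. Fortunately you only need $d_k > 0$, which follows immediately from $x \neq z$ (as $x\notin\mathcal{A}_{w'}(\lambda)\ni z$), so the argument is unaffected.
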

\begin{proof}
For the first assertion, note that if $s_iw < w$, then $s_iw \in W^\lambda$ and there exists a reduced word for $w$ of the form $w = s_is_{i_{k-1}} \cdots s_{i_1}$, so that $s_iw = s_{i_{k-1}} \cdots s_{i_1}$. Thus by Theorem \ref{thm:atomic-op},
\[
\mathcal{A}_w(\lambda) = \Acal_i\Acal_{i_{i-1}} \cdots \Acal_{i_1}(b_\lambda) = \Acal_i(\mathcal{A}_{s_iw}(\lambda)).
\]
The second assertion now follows from the first by induction on $\ell(w)$. The base case is trivial, and if $\ell(w) > 1$ with reduced word $w = s_is_{i_{k-1}} \cdots s_{i_1}$, then we have $s_iw < w$. Thus by the first assertion, for any $x \in \mathcal{A}_w(\lambda)$, we have $e_i(x) \neq 0$, and $e_i^*(x) =: x' \in \mathcal{A}_{s_iw}(\lambda)$. By induction $x' = f_{i_{k-1}}^{d_{k-1}} \cdots f_{i_1}^{d_1}(b_\lambda)$ for $d_1, \ldots, d_{k-1} > 0$ satisfying Equation \eqref{eq:intermediate}, and $x = f_i^d(x')$ for some $d > 0$, so
\[
x = f_i^df_{i_{k-1}}^{d_{k-1}} \cdots f_{i_1}^{d_1}(b_\lambda),
\]
and since $e_i(x') = 0$, $x$ satisfies Equation \eqref{eq:intermediate}.
\end{proof}
The atomic operators therefore provide a recursive way of obtaining entire crystal Demazure atoms. It is in fact necessary to require $w \in W^\lambda$ in the above theorem, as illustrated by the following proposition.
\begin{prop}
For $w \in W^\lambda$, we have
\[
\Fcal_i(\mathcal{A}_w(\lambda)) = \begin{cases}
\mathcal{A}_w(\lambda) & \text{if } s_iw < w \text{ or } s_iw \not\in W^\lambda, \\
\mathcal{A}_w(\lambda) \sqcup \mathcal{A}_{s_iw}(\lambda) & \text{if } w < s_iw \text{ and } w \in W^\lambda.
\end{cases}
\]
\end{prop}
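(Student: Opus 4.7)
The plan is to split into three subcases that exhaust the possibilities for $s_iw$: (a) $s_iw < w$, (b) $s_iw > w$ with $s_iw \notin W^\lambda$, and (c) $s_iw > w$ with $s_iw \in W^\lambda$. Cases (a) and (b) will yield the first branch of the piecewise formula, and case (c) will yield the second.

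The unified strategy for (a) and (b) is to establish that $\Bcal_w(\lambda)$ is itself closed under $\Fcal_i$, after which the argument runs identically in both. In case (a), this closure is immediate from Theorem \ref{thm:dem-def}(2). In case (b), I would first observe that $s_iw \notin W^\lambda$ forces $\langle w \cdot \lambda, \alpha_i^\vee\rangle = 0$: a short root-system check shows that any $s_j \in W_\lambda$ witnessing $\ell(s_iws_j) < \ell(s_iw)$ must satisfy $w(\alpha_j) = \alpha_i$ (since $w \in W^\lambda$ gives $w(\alpha_j) \in \Phi^+$, and $\alpha_i$ is the only positive root sent negative by $s_i$), whence $\langle w\cdot\lambda,\alpha_i^\vee\rangle = \langle \lambda,\alpha_j^\vee\rangle = 0$. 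Consequently $s_iw \in wW_\lambda$ and $\Bcal_w(\lambda) = \Bcal_{s_iw}(\lambda)$, and applying Theorem \ref{thm:dem-def}(2) to $s_iw$ (noting $w = s_i(s_iw) < s_iw$) gives the $\Fcal_i$-closure of $\Bcal_w(\lambda)$. Granted the closure, for any $x \in \mathcal{A}_w(\lambda)$ and $y = f_i^d(x) \neq 0$ we have $y \in \Bcal_w(\lambda)$; if further $y \in \Bcal_v(\lambda)$ for some $v < w$, then the $\Ecal_i$-closure of $\Bcal_v(\lambda)$ from Theorem \ref{thm:dem-def}(1) yields $x = e_i^d(y) \in \Bcal_v(\lambda)$, contradicting $x \in \mathcal{A}_w(\lambda)$. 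Hence $\Fcal_i(\mathcal{A}_w(\lambda)) = \mathcal{A}_w(\lambda)$.

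Case (c) is a direct consequence of Corollary \ref{cor:recursive} applied to $s_iw$ in the role of ``$w$'' --- legal since $s_iw \in W^\lambda$ by hypothesis and $s_i(s_iw) = w < s_iw$ --- which gives $\mathcal{A}_{s_iw}(\lambda) = \Acal_i(\mathcal{A}_w(\lambda)) = \Fcal_i(\mathcal{A}_w(\lambda)) \setminus \mathcal{A}_w(\lambda)$, so $\Fcal_i(\mathcal{A}_w(\lambda)) = \mathcal{A}_w(\lambda) \cup \mathcal{A}_{s_iw}(\lambda)$, with disjointness from Theorem \ref{thm:dem-into-atoms}. The main obstacle is case (b): specifically, recognizing that $s_iw$ must lie in the same $W_\lambda$-coset as $w$ when $s_iw \notin W^\lambda$, so that the $\Fcal_i$-closure of $\Bcal_{s_iw}(\lambda)$ from Theorem \ref{thm:dem-def}(2) can be transferred to $\Bcal_w(\lambda)$; beyond this, everything follows cleanly from the structure theorems already in hand.
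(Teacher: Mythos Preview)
Your proof is correct and follows essentially the same three-case split as the paper's argument; the only minor deviations are that you invoke Theorem~\ref{thm:dem-def}(1) ($\Ecal_i$-closure) rather than Theorem~\ref{thm:dem-def}(3) (extremality) to rule out $f_i^d(x)$ landing in a smaller $\Bcal_v(\lambda)$, and you supply a short root-system justification for $s_iw \in wW_\lambda$ in case~(b), which the paper simply asserts. These are cosmetic differences --- the structure and logic are the same.
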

\begin{proof}
First assume $s_iw < w$, so that $\Fcal_i(\Bcal_w(\lambda)) = \Bcal_w(\lambda)$ by Theorem \ref{thm:dem-def}(2). Then $\Fcal_i(\mathcal{A}_w(\lambda)) \subseteq \Bcal_w(\lambda)$, and if there exists $x \in \mathcal{A}_w(\lambda)$ and $v < w$ such that $f_i^d(x) \in \Bcal_v(\lambda)$ for some $d > 0$, then $x \in \Bcal_v(\lambda)$ since $\Bcal_v(\lambda)$ satisfies Theorem \ref{thm:dem-def}(3). But this is a contradiction, so $f_i^d(x) \in \mathcal{A}_w(\lambda)$ for all $x \in \mathcal{A}_w(\lambda)$, i.e. $\Fcal_i(\mathcal{A}_w(\lambda)) = \mathcal{A}_w(\lambda)$. If $w < s_iw$ and $s_iw \not\in W^\lambda$ then $w \cdot \lambda = s_iw \cdot \lambda$, so that $\Fcal_i(\Bcal_w(\lambda)) = \Bcal_w(\lambda)$. Thus it follows from the above argument that $\Fcal_i(\mathcal{A}_w(\lambda)) = \mathcal{A}_w(\lambda)$.

Now assume $w < s_iw$, and that $s_iw \in W^\lambda$. Then by definition we have
\[
\mathcal{A}_{s_iw}(\lambda) = \Acal_i(\mathcal{A}_w(\lambda)) = \Fcal_i(\mathcal{A}_w(\lambda)) \setminus \mathcal{A}_w(\lambda),
\]
or $\Fcal_i(\mathcal{A}_w(\lambda)) = \mathcal{A}_w(\lambda) \sqcup \mathcal{A}_{s_iw}(\lambda)$.
\end{proof}
This proposition shows that, if we pick $w \not\in W^\lambda$ and a reduced word $w = s_{i_k} \cdots s_{i_1}$, then letting $j$ be minimal such that $s_{i_j} \cdots s_{i_1} \in W^\lambda$ but $s_{i_{j+1}} s_{i_j} \cdots s_{i_1} \not\in W^\lambda$, then
\[
\Acal_{i_{j+1}}(\mathcal{A}_{s_{i_j} \cdots s_{i_1}}(\lambda)) = \Fcal_{i_{j+1}}(\mathcal{A}_{s_{i_j} \cdots s_{i_1}}(\lambda)) \setminus \mathcal{A}_{s_{i_j} \cdots s_{i_1}}(\lambda) = \mathcal{A}_{s_{i_j} \cdots s_{i_1}}(\lambda) \setminus \mathcal{A}_{s_{i_j} \cdots s_{i_1}}(\lambda) = \emptyset,
\]
so that $\Acal_{i_k} \cdots \Acal_{i_1}(b_\lambda) = \emptyset$.

\subsection{Atom-positivity}
As seen above, Demazure crystals decompose as a disjoint union of Demazure atoms, and in particular their characters are positive sums of polynomial Demazure atoms. We generalize this notion as follows:
\begin{defn}
A subset $X \subseteq \Bcal(\lambda)$ is called \emph{strongly atom-positive} if there exists $L_X \subseteq W^\lambda$ such that
\[
X = \bigsqcup_{w \in L_X} \mathcal{A}_w(\lambda),
\]
and $X \subseteq \Bcal(\lambda)$ is \emph{weakly atom-positive} if there exist $\lambda^1,\ldots,\lambda^k \in P^+$ and $w^1, \ldots, w^k \in W$ with each $w^i \in W^{\lambda^i}$ such that
\[
\mathrm{char}(X) = \sum_{i=1}^k \mathrm{char}(\mathcal{A}_{w^i}(\lambda^i)).
\]
\end{defn}
As the names suggest, strong atom-positivity implies weak atom-positivity, but not conversely:
\begin{example}\label{rem:atom}
The following subset of the type $A_2$ crystal $\Bcal(3,2,0)$:
\[
\begin{tikzpicture}
\draw (-5,-1.5) node {$X=$};
\draw (0,0) node (u) {$\begin{ytableau}
2 & 2 \\
1 & 1 & 1
\end{ytableau}$};
\draw (-1.5,-1.5) node (u2) {$\begin{ytableau}
2 & 3 \\
1 & 1 & 1
\end{ytableau}$};
\draw (1,-1.5) node (u21) {$\begin{ytableau}
2 & 3 \\
1 & 1 & 2
\end{ytableau}$};
\draw (3.5,-1.5) node (u211) {$\begin{ytableau}
2 & 3 \\
1 & 2 & 2
\end{ytableau}$};
\draw (-3,-3) node (u22) {$\begin{ytableau}
3 & 3 \\
1 & 1 & 1
\end{ytableau}$};

\draw[->] (u) edge[darkblue,midway,above] node {$f_2$} (u2);
\draw[->] (u2) edge[darkblue,midway,above] (u22);

\draw[->] (u2) edge[darkred,midway,above] node {$f_1$} (u21);
\draw[->] (u21) edge[darkred,midway,above] (u211);

\draw (5.5,-1.5) node {$\subseteq \Bcal(3,2,0)$};
\end{tikzpicture}
\]
satisfies $\mathrm{char}(X) = \mathfrak{A}_{3,2,0)}+ \mathfrak{A}_{(3,0,2)} + \mathfrak{A}_{(1,3,1)}$. However, one can check that $X$ is \emph{not} a disjoint union of crystal Demazure atoms, i.e. $X$ is weakly atom-positive but not strongly atom-positive.
\end{example}
Arbitrary unions of Demazure crystals are strongly atom-positive. The following theorem follows, more or less, from the corresponding decomposition for Demazure crystals.
\begin{thm}\label{thm:ext-union}
For $w^1, \ldots, w^k \in W^\lambda$, we have:
\[
\bigcup_{i=1}^k \Bcal_{w^i}(\lambda) = \bigsqcup_{w \le w^i \text{ for some } i \text{ and } w \in W^\lambda} \mathcal{A}_w(b_\lambda).
\]
\end{thm}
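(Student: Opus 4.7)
The plan is to deduce this directly from Theorem \ref{thm:dem-into-atoms}, which already provides the disjoint decomposition of a single Demazure crystal into atoms indexed by a Bruhat order ideal. The statement for a finite union of Demazure crystals should reduce to a set-theoretic rearrangement, together with the global pairwise disjointness of all Demazure atoms (which is the $w = w_0$ specialization of Theorem \ref{thm:dem-into-atoms}).

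First I would apply Theorem \ref{thm:dem-into-atoms} to each individual $\Bcal_{w^i}(\lambda)$ to obtain
\[
\Bcal_{w^i}(\lambda) = \bigsqcup_{v \in W^\lambda,\ v \le w^i} \mathcal{A}_v(\lambda).
\]
Taking the union over $i = 1, \ldots, k$ and regrouping over the set
\[
S = \{ v \in W^\lambda : v \le w^i \text{ for some } i \}
\]
gives the set-theoretic equality
\[
\bigcup_{i=1}^k \Bcal_{w^i}(\lambda) = \bigcup_{v \in S} \mathcal{A}_v(\lambda),
\]
where the outer union on the right is a priori not disjoint because the same atom $\mathcal{A}_v(\lambda)$ could appear inside several $\Bcal_{w^i}(\lambda)$.

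The remaining step, and the only point requiring any content, is to upgrade this union to a disjoint union. For this I would invoke the fact, noted immediately after Theorem \ref{thm:dem-into-atoms}, that the family $\{ \mathcal{A}_w(\lambda) : w \in W^\lambda \}$ is pairwise disjoint in $\Bcal(\lambda)$ (obtained by taking $w = w_0$ there, so that $\Bcal(\lambda) = \Bcal_{w_0}(\lambda)$ partitions as a disjoint union of all atoms). In particular any subfamily indexed by $S \subseteq W^\lambda$ is also pairwise disjoint, so the union on the right-hand side is automatically disjoint. This yields
\[
\bigcup_{i=1}^k \Bcal_{w^i}(\lambda) = \bigsqcup_{v \in S} \mathcal{A}_v(\lambda),
\]
which is exactly the desired identity.

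There is essentially no obstacle here: the theorem is a formal consequence of the single-Demazure-crystal decomposition and the global disjointness of atoms. The only care needed is to recognize that atoms $\mathcal{A}_v(\lambda)$ appearing for two different $w^i$'s are the \emph{same} subset of $\Bcal(\lambda)$ (not merely overlapping), which is exactly what pairwise disjointness guarantees once we know they intersect.
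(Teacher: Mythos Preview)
Your proposal is correct and follows essentially the same approach as the paper: both arguments rest on Theorem~\ref{thm:dem-into-atoms} together with the global pairwise disjointness of the atoms $\{\mathcal{A}_w(\lambda) : w \in W^\lambda\}$. The only cosmetic difference is that the paper phrases the computation as determining $\mathcal{D} \cap \mathcal{A}_w(\lambda)$ for each $w \in W^\lambda$ and then intersecting with the partition $\Bcal(\lambda) = \bigsqcup_{w \in W^\lambda} \mathcal{A}_w(\lambda)$, whereas you first decompose each $\Bcal_{w^i}(\lambda)$ and then take the union; these are equivalent rearrangements of the same formal argument.
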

\begin{proof}
Let $\mathcal{D} = \bigcup_{i=1}^k \Bcal_{w^i}(\lambda)$. For any $w \in W^\lambda$, we will verify that
\[
\mathcal{D} \cap \mathcal{A}_w(\lambda) = \begin{cases}
\mathcal{A}_w(\lambda) & \text{if } w \le w^i \text{ for some } i, \\
\emptyset & \text{otherwise.}
\end{cases}
\]
First, if $w \le w^i$ for some $1 \le i \le k$, then by Theorem \ref{thm:dem-into-atoms} we have $\mathcal{A}_w(\lambda) \subseteq \Bcal_{w^i}(\lambda) \subseteq \mathcal{D}$, so $\mathcal{D} \cap \mathcal{A}_w(\lambda) = \mathcal{A}_w(\lambda)$.

If $w \not\le w^i$, then $\Bcal_{w^i}(\lambda) \cap \mathcal{A}_w(\lambda) = \emptyset$, so that if $w \not\le w^i$ for each $1 \le i \le k$, we have $\mathcal{D} \cap \mathcal{A}_w(\lambda) = \emptyset$. Therefore,
\[
\mathcal{D} = \mathcal{D} \cap \Bcal(\lambda) = \mathcal{D} \cap \bigsqcup_{w \in W^\lambda} \mathcal{A}_w(\lambda) = \bigsqcup_{w \le w^{(i)} \text{ for some } i \text{ and } w \in W^\lambda} \mathcal{A}_w(b_\lambda).
\]
\end{proof}
The indexing set in the above theorem forms a \emph{lower order ideal} in Bruhat order; that is, a set $\Ical \subseteq W$ such that $w \in \Ical, v \le w \Rightarrow v \in \Ical$. Any lower order ideal in Bruhat order is thus determined by its maximal elements. If $w^1, \ldots, w^k \in \Ical$ are the maximal elements, then we will write $\Ical = \langle w^1, \ldots, w^k \rangle$. With this notation, we have the following:
\begin{defn}
For $w^1, \ldots, w^k \in W^\lambda$, let $\Ical = \langle w^1, \ldots, w^k \rangle$. The \emph{Schubert crystal} $\Bcal_\Ical(\lambda) \subseteq \Bcal(\lambda)$ is
\[
\Bcal_\Ical(\lambda) = \bigcup_{i=1}^k \Bcal_{w^i}(\lambda),
\]
and the \emph{Schubert character} $\kappa_{\lambda,\Ical} \in \mathcal{O}$ is
\[
\kappa_{\lambda,\Ical} = \mathrm{char}(\Bcal_\Ical(\lambda)).
\]
\end{defn}
Schubert crystals and characters play a significant role in Polo's conjecture, as discussed below.
%

\section{Extremal subsets}\label{sec:extremal}
Polo's conjecture \cite{Polo} concerns the structure of the tensor product of Demazure modules $V_v(\lambda) \otimes V_w(\mu)$. To introduce it, we first briefly discuss filtrations of $B$-modules. The Borel-Weil-Bott theorem realizes the simple $G$-module $V(\lambda)$ as the dual of the space of sections $H^0(G/B, \mathcal{L}_{-\lambda})$ of a line bundle $\mathcal{L}_{-\lambda}$ over the flag variety $G/B$. Generalizing this, the Demazure module $V_w(\lambda)$ may be identified with $H^0(X_w, \mathcal{L}_{-\lambda})$, where $X_w = \overline{BwB}/B$ is a \emph{Schubert variety}. As a further generalization, Polo defines the $B$-module $H^0(S,\mathcal{L}_{-\lambda})$, where $S$ is a \emph{union} of Schubert varieties, and calls it a \emph{Schubert module}.

We say that a $B$-module $V$ admits an \emph{excellent filtration} if each successive quotient in the filtration is isomorphic to a Demazure module. The tensor product $V_v(\lambda) \otimes V_w(\lambda)$ does not always admit an excellent filtration; however, Polo conjectured that there should always exist a \emph{Schubert filtration}:
\begin{conj}[\cite{Polo}]
For $\lambda, \mu \in P^+$ and $v, w \in W$, the tensor product $V_v(\lambda) \otimes V_w(\lambda)$ admits a filtration in which each successive quotient is isomorphic to a Schubert module. In particular, there exist $\nu^1, \ldots, \nu^k \in P^+$ and lower order ideals $\Ical^1, \ldots, \Ical^k \subseteq W$ such that
\[
\mathrm{char}(\Bcal_v(\lambda)) \cdot \mathrm{char}(\Bcal_w(\mu)) = \sum_{i=1}^k c_i \kappa_{\nu^i,\Ical^i}
\]
for some $c_i \in \ZZ_{>0}$.
\end{conj}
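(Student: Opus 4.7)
The plan is to lift the conjectured character identity to a crystal-theoretic decomposition. Writing $\mathcal{T} := \Bcal_v(\lambda) \otimes \Bcal_w(\mu) \subseteq \Bcal(\lambda) \otimes \Bcal(\mu)$ and decomposing the ambient tensor product into connected components $\Bcal(\lambda) \otimes \Bcal(\mu) \cong \bigsqcup_{\nu,j} C_{\nu,j}$, with each $C_{\nu,j} \cong \Bcal(\nu^{(j)})$, the aim is to show that each intersection $X_{\nu,j} := \mathcal{T} \cap C_{\nu,j}$ is a Schubert crystal $\Bcal_{\Ical_{\nu,j}}(\nu^{(j)})$ for some lower order ideal $\Ical_{\nu,j} \subseteq W$. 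Summing characters over the components would then produce precisely the claimed expansion $\sum c_i \kappa_{\nu^i,\Ical^i}$.

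First I would verify that each $X_{\nu,j}$ is closed under the raising operators $e_i$, which is immediate from Theorem \ref{thm:dem-def}(1) applied to each tensor factor together with Kashiwara's tensor product rule for $e_i$. This reduces the problem to identifying, within each component $C_{\nu,j}$, which crystal Demazure atoms $\mathcal{A}_w(\nu^{(j)})$ are contained in $X_{\nu,j}$. To prove strong atom-positivity of each $X_{\nu,j}$, the natural tools are Theorem \ref{thm:atomic-op} and Corollary \ref{cor:recursive}: starting from the highest weight element $b_{\nu^{(j)}}$ of $C_{\nu,j}$ and applying successive atomic operators $\Acal_i$ along reduced words, one generates each candidate atom, and the task is to show that if any single element of such an atom lies in $\mathcal{T}$, then the whole atom does.

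The principal obstacle is verifying that the resulting indexing set $L_{\nu,j} := \{ w \in W^{\nu^{(j)}} : \mathcal{A}_w(\nu^{(j)}) \subseteq X_{\nu,j} \}$ is downward closed in Bruhat order. Example \ref{rem:atom} already exhibits a weakly atom-positive subset that fails to be strongly atom-positive, and the paper's parallel observation that atom-positivity and extremality are logically independent warns that local crystal-theoretic arguments will not, by themselves, force Bruhat-downward closure. A plausible route is to extend the right key map of Lascoux-Sch\"utzenberger, Jacon-Lecouvey, and Santos to tensor products, defining $K_+(x \otimes y)$ in a way that is compatible with each factor and such that $K_+$ detects lower-ideal membership in $\mathcal{T}$; the atomic operator framework of this paper, being a global alternative to the right key map, may facilitate such a construction. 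I expect, however, that a full proof will require a genuinely new ingredient — perhaps a refinement of extremality tailored to Schubert rather than Demazure crystals, or a tensor-compatible version of Theorem \ref{thm:atomic-op} — and this is where the main difficulty lies.
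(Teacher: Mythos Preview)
The statement is Polo's \emph{conjecture}; the paper does not prove it and offers no proof to compare against. It is presented as motivation for studying atom-positivity and extremality, and is explicitly left open.

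Your central strategy --- showing that each connected component $X_{\nu,j} = \mathcal{T} \cap C_{\nu,j}$ is a Schubert crystal --- is known to fail, and the paper says so directly in Section~\ref{sec:extremal}: ``when one restricts the tensor product to the subset $\Bcal_v(\lambda) \otimes \Bcal_w(\mu)$, the connected components are not always isomorphic to Schubert crystals.'' Your preliminary observation that $\mathcal{T}$ is closed under each $e_i$ is correct, but this is far weaker than being a Schubert crystal: the components $X_{\nu,j}$ need not even be extremal (that is precisely what the cited theorem of \cite{ADG} detects), and the paper further exhibits extremal subsets that are not weakly atom-positive (Figure~\ref{fig:ext-not-atom}), so $e_i$-closure alone buys essentially nothing toward a Schubert decomposition. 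The obstruction is therefore not merely that Bruhat-downward closure of $L_{\nu,j}$ is hard to verify; it is that the sets $X_{\nu,j}$ produced by Kashiwara's tensor rule are, in general, genuinely not unions of atoms. Your closing caveat that ``a full proof will require a genuinely new ingredient'' is thus an understatement: the specific lift you propose is already ruled out, and any crystal-theoretic attack on the conjecture would need either a different combinatorial tensor model or an argument that remains at the character level without attempting to realize the summands as subsets of $\Bcal(\lambda) \otimes \Bcal(\mu)$.
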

Polo's conjecture suggests that the tensor product $\Bcal_v(\lambda) \otimes \Bcal_w(\mu)$ should admit a decomposition such that each connected component is a Schubert crystal. However, while Kashiwara's tensor product rule witnesses good filtrations in the category of highest weight crystals --- that is, $\Bcal(\lambda) \otimes \Bcal(\mu)$ decomposes as a direct sum of highest weight crystals, replicating the decomposition on the module level --- this rule is not well-equipped to deal with tensor products of Demazure crystals. That is, when one restricts the tensor product to the subset $\Bcal_v(\lambda) \otimes \Bcal_w(\mu)$, the connected components are not always isomorphic to Schubert crystals. However, they are in the following case:
\begin{thm}[\cite{ADG}]
The Kashiwara tensor product of Demazure crystals decomposes as a direct sum of Demazure crystals if and only if the tensor product satisfies Theorem \ref{thm:dem-def}(3).
\end{thm}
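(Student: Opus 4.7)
The plan is to prove the two implications separately, with the forward direction being largely formal and the backward direction carrying most of the work.

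\textbf{Forward direction.} Assume $T := \Bcal_v(\lambda) \otimes \Bcal_w(\mu)$ decomposes as $\bigsqcup_j U_j$, where each connected component $U_j$ is a Demazure crystal embedded in some component $C_j \cong \Bcal(\nu_j)$ of the ambient tensor product $\Bcal(\lambda) \otimes \Bcal(\mu)$. Any $i$-string $S$ of $\Bcal(\lambda) \otimes \Bcal(\mu)$ is contained in a single $C_j$, and Theorem \ref{thm:dem-def}(3) applied to each $U_j \subseteq C_j$ gives $S \cap U_j \in \{\emptyset, \{u_S\}, S\}$. Since the $U_j$'s inside $C_j$ are pairwise disjoint, at most one of them can contain $u_S$; moreover, if some $U_j$ contains all of $S$ then every other $U_k$ must miss $u_S$ entirely, hence $S \cap U_k = \emptyset$. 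Taking unions, $S \cap T$ still lies in $\{\emptyset, \{u_S\}, S\}$.

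\textbf{Backward direction.} Assume $T$ is extremal, and let $U$ be a connected component of $T$ sitting inside some $C_j \cong \Bcal(\nu)$; I will show $U = \Bcal_u(\nu)$ for some $u \in W^\nu$. First, $U$ is closed under each $\Ecal_i$: if $x \in U$ with $e_i(x) \neq 0$, then extremality of $T$ forces the whole $i$-string through $x$ into $T$ (the alternative $\{u_S\}$ would require $x$ itself to be the string top), and connectedness inside $T$ then places $e_i(x) \in U$. Iterating raising operators from any $x \in U$ reaches the unique source in $C_j$, so $b_\nu \in U$. Inspecting the three cases $S \cap T = \emptyset, \{u_S\}, S$ and using connectedness in the last case shows that $U$ is itself an extremal subset of $\Bcal(\nu)$.

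The main obstacle is then the internal characterization: any subset $U \subseteq \Bcal(\nu)$ containing $b_\nu$, closed under all $\Ecal_i$, and satisfying Theorem \ref{thm:dem-def}(3) is necessarily a Demazure crystal. I would attack this by induction on $|U|$. The base case $U = \{b_\nu\} = \Bcal_{\mathrm{id}}(\nu)$ is immediate. For the inductive step, one locates a simple reflection $s_i$ admitting a descent: the set $U' := U \setminus \{f_i^d(y) : y \in U, \, e_i(y) = 0, \, d \geq 1\}$ strips off the bottoms of all full $i$-strings in $U$, and one must verify that $U'$ still satisfies the three hypotheses — the delicate point being closure under $\Ecal_j$ for $j \neq i$. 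By induction $U' = \Bcal_{u'}(\nu)$ for some $u'$ with $s_i u' > u'$, and Theorem \ref{thm:dem-def}(2) identifies $U = \Fcal_i(U')$ with $\Bcal_{s_i u'}(\nu)$. The heart of the argument lies in selecting $i$ and confirming the descent, and here I would lean on Proposition \ref{prop:ext-char}: since an extremal subset generated by its lowest weight elements is uniquely determined by its character, the problem reduces to showing that the extremal weight elements $\{b_{w \cdot \nu} : w \in W^\nu\} \cap U$ form a Bruhat interval $\{v : v \le u\}$, which is a combinatorial claim about the interaction of $\Ecal_i$-closure with Bruhat order.
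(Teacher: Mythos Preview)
The paper does not prove this theorem; it is quoted with a citation to \cite{ADG} and used as input for the discussion of extremality. There is therefore no proof in the paper to compare against.

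That said, your backward direction contains a genuine gap. You reduce to the claim that any $U \subseteq \Bcal(\nu)$ which contains $b_\nu$, is closed under every $\Ecal_i$, and satisfies Theorem~\ref{thm:dem-def}(3) must be a Demazure crystal. This is false. Schubert crystals $\Bcal_\Ical(\nu) = \bigcup_j \Bcal_{w^j}(\nu)$ satisfy all three hypotheses --- closure under $\Ecal_i$ is inherited from each $\Bcal_{w^j}(\nu)$, and the lemma in Section~\ref{sec:extremal} shows that unions of extremal subsets are extremal --- yet a Schubert crystal indexed by an order ideal with more than one maximal element is not a single Demazure crystal. Your inductive stripping argument cannot succeed on such a $U$: there is no single $i$ whose removal drops you to a smaller Demazure crystal, because there is no single $u' \in W$ with $U = \Bcal_{s_iu'}(\nu)$. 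Invoking Proposition~\ref{prop:ext-char} does not help either: that proposition only says that a singly $\Ecal$-generated extremal subset has generator $b_{w\cdot\nu}$; it does not identify the subset with $\Bcal_w(\nu)$, and in any case your $U$ may have several lowest weight elements.

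The theorem in \cite{ADG} genuinely uses that $T$ arises as a tensor product $\Bcal_v(\lambda)\otimes\Bcal_w(\mu)$ of Demazure crystals, not merely that $T$ is an extremal subset of a highest weight crystal. Some structure specific to that situation --- the behaviour of $\Fcal_i$ under Kashiwara's tensor rule, or the link to Kouno's global criterion --- is what rules out the Schubert-crystal obstruction. Your sketch never invokes the tensor-product hypothesis after the first paragraph, so as written it is attempting to prove a strictly stronger (and false) statement.
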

This is in fact a special case of Polo's conjecture, as the connected components of the tensor product are single Demazure crystals, as opposed to Schubert crystals. Kouno first \cite{Kouno} found a global criterion that gives necessary and sufficient conditions for this decomposition to hold, but we opt to work with the local criterion of \cite{ADG}.

\begin{defn}[\cite{AG}]
Any subset $X \subseteq \Bcal(\lambda)$ satisfying Theorem \ref{thm:dem-def}(3) is called an \emph{extremal subset}.
\end{defn}

Note that any extremal subset $X \subseteq \Bcal(\lambda)$ also necessarily satisfies Theorem \ref{thm:dem-def}(1), and thus contains the highest weight element $b_\lambda$. In particular, every extremal subset of a highest weight crystal is connected. The class of extremal subsets is also closed under union and intersection:

\begin{lemma}
If $X, Y \subseteq \Bcal(\lambda)$ are extremal subsets, then so are $X \cup Y$ and $X \cap Y$.
\end{lemma}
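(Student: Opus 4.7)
The plan is to unpack the extremality condition on a single $i$-string and observe that the three allowed intersection values are closed under binary union and intersection. Concretely, fix $i \in I$ and any $i$-string $S \subseteq \Bcal(\lambda)$. Extremality of $X$ means $S \cap X \in \{\emptyset, \{u_S\}, S\}$, and likewise for $Y$. Since $S \cap (X \cup Y) = (S \cap X) \cup (S \cap Y)$ and $S \cap (X \cap Y) = (S \cap X) \cap (S \cap Y)$, it suffices to verify that the family $\mathcal{T}_S := \{\emptyset, \{u_S\}, S\}$ is closed under $\cup$ and $\cap$ as a family of subsets of $S$.

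The first step is therefore to list the relevant cases. For union, the three sets are totally ordered by inclusion $\emptyset \subseteq \{u_S\} \subseteq S$, so the union of any two of them equals the larger one and lies in $\mathcal{T}_S$. For intersection, the same total ordering implies the intersection equals the smaller one and again lies in $\mathcal{T}_S$. Since this holds for every $i$-string $S$ and every $i \in I$, both $X \cup Y$ and $X \cap Y$ satisfy the condition in Theorem \ref{thm:dem-def}(3), so both are extremal.

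The only mild subtlety to flag is that the highest weight element $u_S$ of the string $S$ is the same object regardless of whether we are intersecting $S$ with $X$, with $Y$, or with $X \cup Y$ or $X \cap Y$; it depends only on the ambient crystal $\Bcal(\lambda)$ and the choice of $i$-string. Once that is noted, the proof is essentially a one-line observation, so there is no real obstacle — the argument is purely set-theoretic and does not require invoking any of the recursive machinery from Section \ref{sec:atoms}.
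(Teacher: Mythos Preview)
Your proof is correct and follows essentially the same approach as the paper's: distribute $S$ over the union (resp.\ intersection) and use that $\{\emptyset, \{u_S\}, S\}$ is closed under these operations. Your explicit observation that these three sets are totally ordered by inclusion is a clean way to justify closure, slightly more explicit than the paper's ``it follows that'' phrasing.
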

\begin{proof}
For a given $i$-string $S$ contained in $\Bcal(\lambda)$, we have $(X \cup Y) \cap S = (X \cap S) \cup (Y \cap S)$, so since $X \cap S = \emptyset, S$, or $u_S$ (resp. $Y \cap S$), it follows that the same is true of $(X \cup Y) \cap S$. A similar argument shows the same result for $(X \cap Y) \cap S$.
\end{proof}

\begin{cor}
Schubert crystals are extremal.
\end{cor}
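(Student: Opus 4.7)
The plan is to deduce this corollary essentially for free from the two ingredients already established immediately above it. A Schubert crystal is, by definition, a finite union $\Bcal_\Ical(\lambda) = \bigcup_{i=1}^k \Bcal_{w^i}(\lambda)$ of Demazure crystals indexed by the maximal elements of the lower order ideal $\Ical$. Each Demazure crystal $\Bcal_{w^i}(\lambda)$ is extremal: this is precisely the content of Theorem \ref{thm:dem-def}(3), which was singled out in the introduction as the defining property of interest.

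The second ingredient is the lemma just proved, which asserts that the union of two extremal subsets of $\Bcal(\lambda)$ is again extremal. Since this closure property holds for unions of two extremal subsets, it extends to any finite union by an immediate induction on $k$. Applying this to the finite union $\bigcup_{i=1}^k \Bcal_{w^i}(\lambda)$ yields the conclusion that $\Bcal_\Ical(\lambda)$ is extremal.

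So the entire proof is a one-line invocation: combine Theorem \ref{thm:dem-def}(3) (each $\Bcal_{w^i}(\lambda)$ is extremal) with the preceding lemma (finite unions of extremal subsets are extremal). There is no genuine obstacle here; the corollary is really just an application of the lemma to the defining expression of a Schubert crystal, and the only thing worth noting for the reader is that although the lemma is stated for two sets, the induction to $k$ sets is trivial.
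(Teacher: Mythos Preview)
Your proposal is correct and matches the paper's approach exactly: the corollary is stated without proof precisely because it follows immediately from the preceding lemma (closure of extremal subsets under finite unions) applied to the definition of a Schubert crystal as a union of Demazure crystals, each extremal by Theorem~\ref{thm:dem-def}(3).
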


Thus extremal subsets of highest weight crystals take on particular importance in the context of Polo's conjecture. As such, the original impetus for this work was to study the relationship between extremality and atom-positivity. The relationship, it turns out, is not so direct, as the class of extremal subsets is much broader than the special cases considered above. In particular, there exist extremal subsets which are not weakly atom-positive.
\begin{example}
The highlighted extremal subset $X \subseteq \Bcal(3,2,0,0)$ in Figure \ref{fig:ext-not-atom} satisfies
\[
\mathrm{char}(X) = \mathfrak{A}_{(3,2,0,0)} + \mathfrak{A}_{(3,0,2,0)} + \mathfrak{A}_{(0,3,2,0)} + \mathfrak{A}_{(3,0,0,2)} + \mathfrak{A}_{(2,3,0,2)} - \mathfrak{A}_{(1,3,0,1)}.
\]
\end{example}
This incongruence essentially follows from the fact that crystal Demazure atoms are not, in general, connected, so that an extremal subset may contain a strict subset of a crystal Demazure atom. Conversely, there also exist atom-positive subsets which are not extremal.
\begin{example}
Let $\lambda \in P^+$ such that there exist $i,j,k \in I$ which are pairwise disjoint, satisfying $s_is_k = s_ks_i$ and $s_j, s_is_j, s_ks_is_j \in W^\lambda$. Then, define
\[
X = \mathcal{A}_{id}(\lambda) \sqcup \mathcal{A}_{s_j}(\lambda) \sqcup \mathcal{A}_{s_is_j}(\lambda) \sqcup \mathcal{A}_{s_ks_is_j}(\lambda) \subseteq \Bcal(\lambda),
\]
noting that each set on the right-hand side is nonempty by choice of $i,j,k$. The subset $X$ is furthermore connected by Corollary \ref{cor:recursive}, since in particular there exists a path in $X$ to the highest weight element $b_\lambda$ for any $x \in X$. Now, let $w = s_ks_is_j$. Since $s_is_k = s_ks_i$, we have $s_iw < w$, so by Corollary \ref{cor:recursive} we have $\mathcal{A}_w(\lambda) = \Acal_{i}(\mathcal{A}_{s_ks_j}(\lambda))$; in particular, $e_i^*(x) \in \mathcal{A}_{s_ks_j}(\lambda)$ for any $x \in \mathcal{A}_w(\lambda)$. But $\mathcal{A}_{s_ks_j}(\lambda) \cap X = \emptyset$, i.e. $x \in X$ but $e_i^*(x) \not\in X$. Thus $X$ is a connected, strongly atom-positive subset which is not extremal.
\end{example}
\begin{figure}\label{fig:ext-not-atom}
\begin{center}
\ytableausetup{boxsize=1em}
\begin{tikzpicture}
\draw (0,0) node (u) {$\begin{ytableau}
2 & 2 \\
1 & 1 & 1
\end{ytableau}$};
\draw [opacity=0.3] (3,0) node (u1) {$\begin{ytableau}
2 & 2 \\
1 & 1 & 2
\end{ytableau}$};
\draw (-2,-2) node (u2) {$\begin{ytableau}
2 & 3 \\
1 & 1 & 1
\end{ytableau}$};
\draw (1,-2) node (u21) {$\begin{ytableau}
2 & 3 \\
1 & 1 & 2
\end{ytableau}$};
\draw (4,-2) node (u211) {$\begin{ytableau}
2 & 3 \\
1 & 2 & 2
\end{ytableau}$};
\draw (-4,-4) node (u22) {$\begin{ytableau}
3 & 3 \\
1 & 1 & 1
\end{ytableau}$};
\draw (-1,-4) node (u221) {$\begin{ytableau}
3 & 3 \\
1 & 1 & 2
\end{ytableau}$};
\draw (2,-4) node (u2211) {$\begin{ytableau}
3 & 3 \\
1 & 2 & 2
\end{ytableau}$};
\draw (5,-4) node (u22111) {$\begin{ytableau}
3 & 3 \\
2 & 2 & 2
\end{ytableau}$};

\draw (-2,-5) node (u23) {$\begin{ytableau}
2 & 4 \\
1 & 1 & 1
\end{ytableau}$};
\draw [opacity=0.3] (1,-5) node (u231) {$\begin{ytableau}
2 & 4 \\
1 & 1 & 2
\end{ytableau}$};
\draw [opacity=0.3] (4,-5) node (u2311) {$\begin{ytableau}
2 & 4 \\
1 & 2 & 2
\end{ytableau}$};

\draw (-4,-7) node (u223) {$\begin{ytableau}
3 & 4 \\
1 & 1 & 1
\end{ytableau}$};
\draw (-4,-10) node (u2233) {$\begin{ytableau}
4 & 4 \\
1 & 1 & 1
\end{ytableau}$};

\draw (-1,-7) node (u2231) {$\begin{ytableau}
3 & 4 \\
1 & 1 & 2
\end{ytableau}$};
\draw (2,-7) node (u22311) {$\begin{ytableau}
3 & 4 \\
1 & 2 & 2
\end{ytableau}$};
\draw (5,-7) node (u223111) {$\begin{ytableau}
3 & 4 \\
2 & 2 & 2
\end{ytableau}$};

\draw (-1,-10) node (u22331) {$\begin{ytableau}
4 & 4 \\
1 & 1 & 2
\end{ytableau}$};
\draw (2,-10) node (u223311) {$\begin{ytableau}
4 & 4 \\
1 & 2 & 2
\end{ytableau}$};
\draw (5,-10) node (u2233111) {$\begin{ytableau}
4 & 4 \\
2 & 2 & 2
\end{ytableau}$};

\path[->] (u) edge[darkblue,midway,left] node {$f_2$} (u2);
\path[->] (u2) edge[darkblue,midway,left] (u22);
\path[->] (u21) edge[darkblue,midway,left] (u221);

\path[->] (u) edge[darkred,midway,above] node {$f_1$} (u1);
\path[->] (u2) edge[darkred,midway,above] (u21);
\path[->] (u21) edge[darkred,midway,above] (u211);
\path[->] (u22) edge[darkred,midway,above] (u221);
\path[->] (u221) edge[darkred,midway,above] (u2211);
\path[->] (u2211) edge[darkred,midway,above] (u22111);
\path[->] (u23) edge[darkred,midway,above] (u231);
\path[->] (u231) edge[darkred,midway,above] (u2311);

\path[->] (u22) edge[darkgreen,midway,left] node {$f_3$} (u223);
\path[->] (u2) edge[darkgreen,midway,left] (u23);
\path[->] (u223) edge[darkgreen,midway,left] (u2233);

\path[->] (u21) edge[darkgreen,midway,left] (u231);
\path[->] (u211) edge[darkgreen,midway,left] (u2311);

\path[->] (u223) edge[darkred,midway,above] (u2231);
\path[->] (u2231) edge[darkred,midway,above] (u22311);
\path[->] (u22311) edge[darkred,midway,above] (u223111);

\path[->] (u2233) edge[darkred,midway,above] (u22331);
\path[->] (u22331) edge[darkred,midway,above] (u223311);
\path[->] (u223311) edge[darkred,midway,above] (u2233111);

\path[->] (u23) edge[darkblue,midway,left] (u223);
\path[->] (u221) edge[darkgreen,midway,left] (u2231);
\path[->] (u2231) edge[darkgreen,midway,left] (u22331);
\path[->] (u2211) edge[darkgreen,midway,left] (u22311);
\path[->] (u22311) edge[darkgreen,midway,left] (u223311);
\path[->] (u22111) edge[darkgreen,midway,left] (u223111);
\path[->] (u223111) edge[darkgreen,midway,left] (u2233111);
\end{tikzpicture}
\end{center}
\caption{The type $A_3$ Demazure crystal $\Bcal_{s_1s_3s_2}(3,2,0,0)$, with an extremal subset $X$ highlighted. Note that the crystal Demazure atom $\mathcal{A}_{s_1s_3s_2}(3,2,0,0)$ is not connected.}
\end{figure}
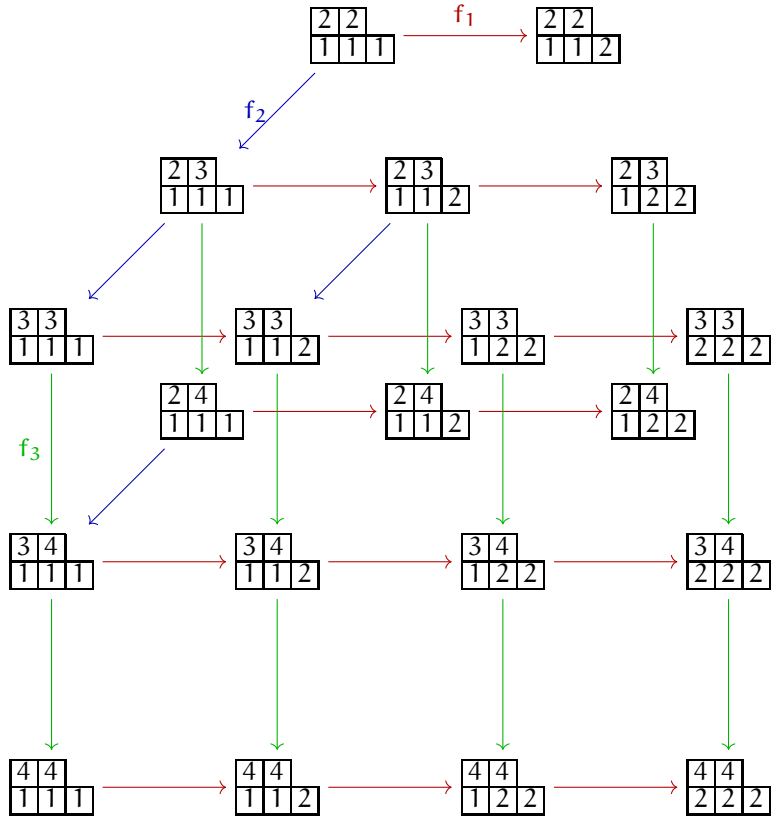

Nevertheless, extremality is still a rather rigid condition to impose on a subset of a highest weight crystal, and we wish to further study the structure of extremal subsets. To this end, Assaf and Gonz\'{a}lez define the \emph{lowest weight elements} of an extremal subset:
\begin{defn}[\cite{AG}]
Given an extremal subset $X \subseteq \Bcal(\lambda)$, an element $x \in X$ is said to be a \emph{lowest weight element} if for each $i \in I$, either $f_i(x) = 0$ or $f_i(x) \not\in X$.
\end{defn}
Lowest weight elements are closely related to extremal weight elements, but are more general: there exist lowest weight elements whose weights are not extremal. For instance, one such element is $f_1^2f_2(b_\lambda)$ in Figure \ref{fig:ext-not-atom}. Note furthermore that if $x$ is a lowest weight element and $0 \neq f_i(x) \not\in X$, then we must have $e_i(x) = 0$ since $X$ is extremal. Thus for each $i \in I$, we have either $f_i(x) = 0$ or $e_i(x) = 0$ (or both), so $x$ must lie at the top or the bottom of each $i$-string passing through $x$. However, there exist crystal elements $x \in \Bcal(\lambda)$ satisfying this weaker condition which are \emph{not} lowest weight elements in any extremal subset $X \subseteq \Bcal(\lambda)$:
\begin{example}
Let $\lambda = (4,4,3,2,0,0)$ and consider the type $A_5$ crystal $\Bcal(\lambda)$, along with the element
\[
\ytableausetup{boxsize=1em,centertableaux}
x = \begin{ytableau}
5 & 6 \\
3 & 4 & 5 \\
2 & 3 & 4 & 4 \\
1 & 2 & 3 & 3
\end{ytableau} \in \Bcal(\lambda).
\]
Then $f_1(x) = f_2(x) = e_3(x) = e_4(x) = e_5(x) = 0$, so $x$ lies either at the top or the bottom of each $i$-string incident to $x$. If $X \subseteq \Bcal(\lambda)$ is an extremal subset containing $x$, then since $e_2(x) \neq 0$, we have
\[
e_2(x) = \begin{ytableau}
5 & 6 \\
3 & 4 & 5 \\
2 & 3 & 4 & 4 \\
1 & 2 & {\color{darkred} 2} & 3
\end{ytableau} \in X.
\]
We then have $e_3e_2(x) \neq 0$, so the $3$-string through $e_2(x)$ must be contained in $X$. In particular,
\[
f_3e_2(x) = \begin{ytableau}
5 & 6 \\
{\color{darkred} 4} & 4 & 5 \\
2 & 3 & 4 & 4 \\
1 & 2 & 2 & 3
\end{ytableau} \in X.
\]
Finally, since $e_2f_3e_2(x) \neq 0$, the $2$-string through $f_3e_2(x)$ must also be contained in $X$, so that
\[
f_2f_3e_2(x) = \begin{ytableau}
5 & 6 \\
4 & 4 & 5 \\
2 & 3 & 4 & 4 \\
1 & 2 & {\color{darkred} 3} & 3
\end{ytableau} \in X.
\]
But $f_2f_3e_2(x) = f_3(x) \in X$, so $x$ is not a lowest weight element in any extremal subset of $\Bcal(\lambda)$.
\end{example}
Extending the $\Ecal_i$ operator, for any $X \subseteq \Bcal(\lambda)$ we define 
\[
\Ecal(X) = \{ e_{i_k}^{d_k} \cdots e_{i_1}^{d_1}(x) : x \in X, i_j \in I, d_j \ge 0 \} - \{ 0 \} \subseteq \Bcal(\lambda).
\]
An extremal subset is generated by its lowest weight elements in the following sense:
\begin{prop}
Let $X \subseteq \Bcal(\lambda)$ be an extremal subset and $\{ z_1, \ldots, z_k \}$ its set of lowest weight elements. Then $X = \Ecal(\{ z_1, \ldots, z_k \})$.
\end{prop}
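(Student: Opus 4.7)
The plan is to establish both inclusions $\Ecal(\{z_1, \ldots, z_k\}) \subseteq X$ and $X \subseteq \Ecal(\{z_1, \ldots, z_k\})$ separately.

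For the forward inclusion, I would first recall that extremality of $X$ forces closure under each raising operator, as the paper already points out that any extremal subset satisfies Theorem \ref{thm:dem-def}(1). This is a one-line case analysis: if $x \in X$ and $e_i(x) \neq 0$, then the $i$-string $S$ through $x$ satisfies $x \in S \cap X$ and $x \neq u_S$, so the extremality condition rules out the possibilities $S \cap X \in \{\emptyset, \{u_S\}\}$, forcing $S \cap X = S$ and in particular $e_i(x) \in X$. Iterating over finite sequences of raising operators yields $\Ecal(X) \subseteq X$, and since each $z_j$ lies in $X$, this gives $\Ecal(\{z_1, \ldots, z_k\}) \subseteq X$.

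The content is in the reverse inclusion. The plan is, given any $x \in X$, to build a descending path through $X$ that necessarily terminates at a lowest weight element. Explicitly, set $x_0 := x$; whenever $x_m \in X$ is not a lowest weight element, the definition of lowest weight element guarantees some $i_m \in I$ with $f_{i_m}(x_m) \in X \setminus \{0\}$, and we set $x_{m+1} := f_{i_m}(x_m)$. Since each step strictly decreases the weight by $\alpha_{i_m}$ and $\Bcal(\lambda)$ is finite, the process must halt at some $x_r$, which is then a lowest weight element, say $x_r = z_j$. Applying axiom (A1) to invert each step of the path gives
\[
x = x_0 = e_{i_0} e_{i_1} \cdots e_{i_{r-1}}(z_j) \in \Ecal(\{z_1, \ldots, z_k\}),
\]
as desired.

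No step presents a genuine obstacle; the argument is a finite descent combined with the observation that extremality ensures the raising operators in the forward inclusion and the $f_i$-moves in the reverse inclusion remain inside $X$. The only subtle point is termination of the descent, which is immediate from finiteness of the ambient crystal together with the strict weight decrease at each step.
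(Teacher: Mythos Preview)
Your proof is correct and follows essentially the same approach as the paper. The paper's proof only writes out the inclusion $X \subseteq \Ecal(\{z_1,\ldots,z_k\})$, tacitly relying on the already-noted fact that extremal subsets satisfy Theorem~\ref{thm:dem-def}(1) for the other direction, and it uses full-string descents $f_{i}^*$ at each stage rather than single $f_i$ steps; but these are cosmetic differences and the underlying argument is the same finite descent to a lowest weight element.
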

\begin{proof}
Let $x \in X$. We must show that $x \in \Ecal(z_j)$ for some $1 \le j \le k$, or equivalently, that $z_j \in \Fcal(x)$. If for each $i \in I$ we have $f_i(x) = 0$ or $f_i(x) \not\in X$, then $x$ is a lowest weight element of $X$, and we are done. Otherwise, select $i_1 \in I$ such that $f_{i_1}(x) \in X$. Then $x' := f_{i_1}^*(x) \in X$, since $X$ is extremal. Repeating the above recipe, we find $i_2 \in I$ such that $f_{i_2}(x') \in X$. If this is not possible, then $x' \in \{ z_1, \ldots, z_k \}$. Otherwise, repeat with $x'' := f_{i_2}^*(x')$. Since $X$ is finite, we can continue in this way to obtain $i_1, \ldots, i_k \in I$ such that $f_{i_k}^* \cdots f_{i_1}^*(x) \in \{ z_1, \ldots, z_k \}$, i.e. $x \in \Ecal(\{ z_1, \ldots, z_k \})$.
\end{proof}
As a shorthand, we will say that a subset $X \subseteq \Bcal(\lambda)$ is \emph{$\Ecal$-generated} by $L \subseteq X$ if $X = \Ecal(L)$. While a lowest weight element need not, in general, have extremal weight, it must in the following special case:
\begin{prop}\label{prop:e-gen}\label{prop:ext-char}
If $X \subseteq \Bcal(\lambda)$ is extremal and $X = \Ecal(\{ x \})$ for some $x \in X$, then $x = b_{w \cdot \lambda}$ for some $w \in W^\lambda$.
\end{prop}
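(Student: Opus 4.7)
The plan is to construct an explicit path inside $X$ from $b_\lambda$ down to $x$ using the operators $f_i^*$, showing inductively that every element along the way has extremal weight. Two observations are needed at the outset. First, $x$ is the unique lowest weight element of $X$: since raising strictly increases weight, every element of $\Ecal(\{x\})$ has weight at least $\mathrm{wt}(x)$, with equality only at $x$; in particular, any $z \in X$ with $z \neq x$ has $f_j(z) \in X$ for some $j \in I$. Second, since $X$ is extremal it contains $b_\lambda$.

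Now build a sequence $x_0 = b_\lambda, x_1, \ldots, x_m = x$ inductively: at each $x_k \neq x$, pick $i_k \in I$ with $f_{i_k}(x_k) \in X$, and set $x_{k+1} := f_{i_k}^*(x_k)$. Extremality forces the entire $i_k$-string through $x_k$ into $X$, so $x_{k+1} \in X$. Each step strictly decreases weight by $\varphi_{i_k}(x_k)\alpha_{i_k}$, so the sequence terminates in finitely many steps at a lowest weight element of $X$, which must be $x$ by uniqueness. The core inductive claim is that each $x_k$ has extremal weight, i.e. $x_k = b_{v_k \cdot \lambda}$ for some $v_k \in W$. The base $k=0$ is immediate. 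For the inductive step, Kashiwara's characterization of extremal weight elements in a highest weight crystal guarantees that at $x_k = b_{v_k \cdot \lambda}$, either $\varepsilon_{i_k}(x_k) = 0$ or $\varphi_{i_k}(x_k) = 0$; since $f_{i_k}(x_k)\in X$ is nonzero we have $\varphi_{i_k}(x_k) > 0$, forcing $\varepsilon_{i_k}(x_k) = 0$. A short computation then yields
\[
\mathrm{wt}(x_{k+1}) = \mathrm{wt}(x_k) - \varphi_{i_k}(x_k)\alpha_{i_k} = v_k\lambda - \langle v_k\lambda, \alpha_{i_k}^\vee\rangle \alpha_{i_k} = s_{i_k} v_k \cdot \lambda,
\]
and by multiplicity one of weights in $W\lambda$ inside $\Bcal(\lambda)$, $x_{k+1} = b_{s_{i_k} v_k \cdot \lambda}$.

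Taking $k=m$ gives $x = b_{v_m \cdot \lambda}$; replacing $v_m$ by the minimum-length representative $w$ of the coset $v_m W_\lambda$ yields $x = b_{w\cdot\lambda}$ with $w \in W^\lambda$. The only real subtlety is the invocation of Kashiwara's characterization of extremal weight elements, which secures the top-or-bottom-of-string property $\min(\varepsilon_{i_k}(x_k), \varphi_{i_k}(x_k)) = 0$ at each intermediate $b_{v_k\cdot\lambda}$; this is the hinge that converts each application of $f^*$ into a simple reflection on the weight. Everything else — existence of $i_k$, termination, and identification of the terminal element with $x$ — follows routinely from the two preliminary observations.
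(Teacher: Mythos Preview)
Your proof is correct, and it takes a genuinely different route from the paper's. The paper argues by contradiction: assuming no expression $x = f_{i_k}^* \cdots f_{i_1}^*(b_\lambda)$ exists, it fixes an arbitrary expression $x = f_{i_k}^{d_k} \cdots f_{i_1}^{d_1}(b_\lambda)$, locates the minimal index $j$ at which $d_j < \varphi_{i_j}(\cdots)$, and shows that the bottom $f_{i_j}^*(x')$ of the $i_j$-string through $x' = f_{i_{j-1}}^* \cdots f_{i_1}^*(b_\lambda)$ cannot lie in $\Ecal(\{x\})$ (else one could rewrite $x$ with a larger initial run of $f^*$'s). Since $x'$ and $f_{i_j}^{d_j}(x')$ both lie in $X$, this violates extremality directly. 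The paper's argument thus stays entirely internal to the hypothesis on $X$ and never appeals to any structural property of the elements $b_{w\lambda}$ themselves.

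Your argument is constructive: you descend from $b_\lambda$ inside $X$ via successive $f_i^*$'s until you hit the unique lowest weight element, which you have already identified as $x$. The crux is the imported fact that every $b_{v\lambda}$ satisfies $\min(\varepsilon_i,\varphi_i)=0$ for all $i$; this is what guarantees that each $x_{k+1}=f_{i_k}^*(x_k)$ again has extremal weight $s_{i_k}v_k\lambda$. What your approach buys is an explicit word $s_{i_m}\cdots s_{i_1}$ realizing $w$, and a transparent reason the conclusion holds; the cost is reliance on a (standard, but not proved in the paper) property of extremal weight vectors. The paper's approach is more self-contained but less constructive.
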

\begin{proof}
We will show that there exist $i_1, \ldots, i_k \in I$ such that $x = f_{i_k}^* \cdots f_{i_1}^*(b_\lambda)$. Assume this is not the case, so that for every expression $x =  f_{i_k}^{d_k} \cdots f_{i_1}^{d_1}(b_\lambda)$, there exists an index $j$ such that $d_j < \varphi_{i_j}(f_{i_{j-1}}^{d_{j-1}} \cdots f_{i_1}^{d_1}(b_\lambda))$. Assume $j$ is the minimal such index, so that
\[
x = f_{i_k}^{d_k} \cdots f_{i_j}^{d_j}f_{i_{j-1}}^* \cdots f_{i_1}^*(b_\lambda),
\]
and let $x' = f_{i_{j-1}}^* \cdots f_{i_1}^*(b_\lambda)$. We claim that $f_{i_j}^*(x') \not\in \Ecal(\{ x \})$. If it were, then $f_{i_j}^*(x') = e_{a_1}^{c_1} \cdots e_{a_\ell}^{c_\ell}(x)$ for some $a_1, \ldots, a_\ell \in I$ and $c_1, \ldots, c_\ell > 0$, so that
\[
x = f_{a_\ell}^{c_\ell} \cdots f_{a_1}^{c_1}(f_{i_j}^*(x')) = f_{a_\ell}^{c_\ell} \cdots f_{a_1}^{c_1}f_{i_j}^*f_{i_{j-1}}^*\cdots f_{i_1}^*(b_\lambda).
\]
By the minimality of $j$, though, this is impossible, so $f_{i_j}^*(x') \not\in \Ecal(x)$. But this now contradicts the assumption that $\Ecal(\{ x \})$ is extremal, since for $S$ the $i_j$-string through $x'$, 
\[
S \cap \Ecal(x) \not\in \{ \emptyset, \{ u_S \}, S \}.
\]
Thus there exist $i_1,\ldots,i_k$ such that $x = f_{i_k}^* \cdots f_{i_1}^*(b_\lambda)$, i.e. $x = b_{w \cdot \lambda}$ for $w = s_{i_k} \cdots s_{i_1}$.
\end{proof}
\begin{cor}
Assume $X \subseteq \Bcal(\lambda)$ is an extremal subset such that $X = \Ecal(\{ x \})$ for some $x \in X$. If $Y \subseteq \Bcal(\lambda)$ is another extremal subset such that $\mathrm{char}(X) = \mathrm{char}(Y)$, then $X = Y$.
\end{cor}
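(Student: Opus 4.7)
The plan is to leverage the two facts that (i) $X$ contains a distinguished extremal weight element and (ii) extremal subsets are forced to be closed under raising operators, and then deduce equality by a character--cardinality comparison.

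First, by Proposition \ref{prop:ext-char}, the generator $x$ is of the form $x = b_{w \cdot \lambda}$ for some $w \in W^\lambda$, so that $X = \Ecal(\{b_{w\cdot\lambda}\})$. The extremal weight element $b_{w\cdot \lambda}$ is the unique element of weight $w \cdot \lambda$ in the ambient highest weight crystal $\Bcal(\lambda)$. Since $\mathrm{char}(X) = \mathrm{char}(Y)$, the weight multiplicities match, so the unique weight-$(w\cdot \lambda)$ element of $\Bcal(\lambda)$ must appear in $Y$ as well; that is, $b_{w\cdot\lambda} \in Y$.

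Next, I would show the general fact that any extremal subset of $\Bcal(\lambda)$ is closed under the operators $e_i$. Let $y \in Y$ and $i \in I$ with $e_i(y) \neq 0$, and let $S$ denote the $i$-string through $y$. Extremality of $Y$ forces $S \cap Y \in \{\emptyset, \{u_S\}, S\}$; the first option is excluded since $y \in S \cap Y$, and the second option would require $y = u_S$ and thus $e_i(y) = 0$, contradicting our choice. Hence $S \cap Y = S$, so in particular $e_i(y) \in Y$. Iterating this observation starting from $b_{w \cdot \lambda} \in Y$ and applying arbitrary sequences of raising operators yields $\Ecal(\{b_{w\cdot\lambda}\}) \subseteq Y$, i.e.\ $X \subseteq Y$.

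Finally, the assumption $\mathrm{char}(X) = \mathrm{char}(Y)$ gives $|X| = |Y|$ (both equal the total weight multiplicity), so $X \subseteq Y$ together with equal finite cardinality yields $X = Y$. The only substantive step is the closure of extremal subsets under $e_i$, which is essentially immediate from property (3) of Theorem \ref{thm:dem-def} applied to a single $i$-string; the rest is bookkeeping. No new construction or induction on $\ell(w)$ is required, so I do not anticipate any serious obstacle.
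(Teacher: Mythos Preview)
Your proposal is correct and follows essentially the same approach as the paper's proof: use Proposition~\ref{prop:ext-char} to identify $x$ as the unique extremal weight element $b_{w\cdot\lambda}$, deduce $b_{w\cdot\lambda}\in Y$ from the character equality, use closure of extremal subsets under raising operators to get $X=\Ecal(\{x\})\subseteq Y$, and conclude by cardinality. The only difference is that where the paper simply cites the earlier observation that extremal subsets satisfy Theorem~\ref{thm:dem-def}(1), you reprove this closure property directly from the string condition; this is a cosmetic elaboration, not a different argument.
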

\begin{proof}
By Proposition \ref{prop:e-gen}, we have $x = b_{w \cdot \lambda}$ for some $w \in W^\lambda$. In particular, since $b_{w \cdot \lambda}$ is the unique element of $\Bcal(\lambda)$ of weight $w \cdot \lambda$, we must have $x \in Y$ since $\mathrm{char}(X) = \mathrm{char}(Y)$. Thus $X = \Ecal(\{ x \}) \subseteq Y$ since $Y$ satisfies Theorem \ref{thm:dem-def}(1), and since their characters coincide, we must have equality: $X = Y$.
\end{proof}

Thus extremal subsets which are $\Ecal$-generated by a single element are uniquely determined by their character. This result is not true in general for extremal subsets, however:
\begin{prop}
Extremal subsets are not uniquely determined by their character. That is, there exist $\lambda \in P^+$ and $X,Y \subseteq \Bcal(\lambda)$ extremal subsets such that $\mathrm{char}(X) = \mathrm{char}(Y)$.
\end{prop}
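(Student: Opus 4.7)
The plan is to exhibit an explicit counterexample: a highest weight crystal $\Bcal(\lambda)$ together with two distinct extremal subsets $X, Y \subseteq \Bcal(\lambda)$ for which $\mathrm{char}(X) = \mathrm{char}(Y)$. For such subsets to exist, $\Bcal(\lambda)$ must admit at least one weight $\mu$ with $\dim \Bcal(\lambda)_\mu \ge 2$, since otherwise equal characters would force $X = Y$ elementwise. A natural place to search is type $A_3$ with $\lambda = (2,1,1,0)$, the adjoint representation of $\mathfrak{sl}_4$ of dimension $15$, where the weight $\mu = (1,1,1,1)$ has multiplicity $3$. The three tableaux at weight $\mu$,
\[
T_1 = \ytableausetup{boxsize=1em}\begin{ytableau} 3 \\ 2 \\ 1 & 4 \end{ytableau},\quad T_2 = \begin{ytableau} 4 \\ 2 \\ 1 & 3 \end{ytableau},\quad T_3 = \begin{ytableau} 4 \\ 3 \\ 1 & 2 \end{ytableau},
\]
have a symmetric structure: each $T_i$ sits as the middle element of a length-three $j$-string for a distinct simple root $j$, and is a singleton in the other two string directions.

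First I would enumerate the elements of $\Bcal(\lambda)$, identify their positions within each $i$-string, and for each subset of $\{T_1, T_2, T_3\}$ compute the minimal extremal subset containing those tableaux by iteratively applying upward closure under the $\Ecal_i$ and completing every non-singleton $i$-string that a contained non-top element would otherwise force to violate extremality. I would then search among these minimal extremals and their enlargements for two distinct subsets whose weight multiplicities coincide. Once a candidate pair $(X,Y)$ is found, verification is mechanical: check the axiom $S \cap X,\ S \cap Y \in \{\emptyset, \{u_S\}, S\}$ for every $i$-string $S$, and tally weight multiplicities to confirm $\mathrm{char}(X) = \mathrm{char}(Y)$.

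The hard part is the search itself. Extremality is a rigid condition, since whenever $X$ contains a non-top element of any $i$-string $S$, the entire string $S$ must also lie in $X$. This forbids direct single-element swaps at $\mu$ and instead forces compensating swaps of larger blocks of elements elsewhere in the crystal. In the example above, the minimal extremals containing $T_1$, $T_2$, and $T_3$ individually have sizes $5$, $6$, and $5$ respectively, so the smallest candidate pairs will not match, and one must work with larger extremals where the asymmetry in forced inclusions can be absorbed by judicious choices of auxiliary $i$-string completions.
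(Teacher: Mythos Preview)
Your proposal is a search plan rather than a proof: you never actually exhibit the pair $(X,Y)$. More seriously, the search cannot succeed in the crystal you have chosen. In $\Bcal(2,1,1,0)$ of type $A_3$ the only weight of multiplicity greater than one is $(1,1,1,1)$, so two extremal subsets with equal character can differ only in which of $T_1,T_2,T_3$ they contain. But your own structural observation kills this: each $T_k$ is the \emph{middle} element of a length-three $i_k$-string whose endpoints lie at the root weights $(1,1,1,1)\pm\alpha_{i_k}$, and those endpoints have multiplicity one. If $T_k\in X\setminus Y$, extremality of $X$ forces the whole $i_k$-string into $X$; the endpoints, having unique weight, then lie in $Y$ as well, so $Y$ meets that $i_k$-string in exactly its two endpoints --- violating extremality of $Y$. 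Hence no such pair exists in $\Bcal(2,1,1,0)$, and no amount of ``judicious auxiliary string completion'' can repair this, since all compensating swaps must occur at the single multiplicity-three weight.

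The paper's construction avoids this obstruction by working in a crystal large enough to admit two elements of the \emph{same} non-extremal weight, each of which can serve as a lowest weight element of an extremal subset (so for every $i$ one has $e_i=0$ or $f_i=0$, never ``middle of a string''). Concretely it takes type $A_5$ with $\lambda=(3,1,1,0,0,0)$, uses Levi branching to $J=\{1,2,3,4\}$ to produce a large extremal ``base'' $X$, and then attaches two different small extremal pieces $Y_1,Y_2$ of equal character whose differing elements sit at multiplicity-$\ge 2$ weights and are tops of the relevant strings. The phenomenon you need is precisely the one flagged after Proposition~\ref{prop:e-gen}: a lowest weight element of non-extremal weight. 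Your adjoint-representation crystal is too small to contain one.
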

 A witness to this proposition is given in Example \ref{ex:no}, and the statement more or less follows from the fact that there may exist lowest weight elements in an extremal subset which do not have extremal weight.

To construct two such subsets, it is useful to recall the notion of \emph{Levi branching} of normal crystals (see, for instance, \cite{BS}). For any $J \subseteq I$, we obtain a normal crystal $\Bcal(\lambda)_J$ for the root system $\Phi_J$ given by discarding the data of $f_i, e_i, \varphi_i, \varepsilon_i$ for $i \not\in J$. It is fairly straightforward to see that extremal subsets are preserved under Levi branching: that is, if $X \subseteq \Bcal(\lambda)$ is an extremal subset and $J \subseteq I$, then letting $X_J \subseteq \Bcal(\lambda)_J$ denote the image of $X$ under Levi branching to $J$, we have that each connected component of $X_J$ is extremal.

In the special case that $\Bcal(\lambda)$ is a type $A_{n-1}$ crystal and $J = [n-2]$, the branched crystal $\Bcal(\lambda)_J$ decomposes as
\[
\Bcal(\lambda)_J \cong \bigoplus_{\mu : \lambda / \mu \text{ a horizontal strip}} \Bcal(\mu) \boxtimes \Bcal(\lambda/\mu),
\]
where $\Bcal(\mu) \boxtimes \Bcal(\lambda/\mu)$ consists of those $x \in SSYT_n(\lambda)$ such that $x\vert_{\le n-1} \in SSYT_{n-1}(\mu)$, and $\lambda / \mu$ is a horizontal strip if it does not contain multiple cells in the same column. Thus $x,y \in \Bcal(\lambda)$ are in the same connected component of $\Bcal(\lambda)_J$ if and only if their entries of $n$ are in the same positions. As a crystal, $\Bcal(\mu) \boxtimes \Bcal(\lambda/\mu)$ is related to $\Bcal(\mu)$ via the natural restriction map on the weight lattice $P_{GL_n} \to P_{GL_{n-1}}$.

\begin{example}\label{ex:no}
Consider the type $A_5$ crystal $\Bcal(\lambda)$ indexed by $\lambda = (3,1,1,0,0,0)$, so that $I = [5]$, and the following subsets of $\Bcal(\lambda)$:
\[
\ytableausetup{boxsize=1em}
\begin{array}{cc}
\begin{tikzpicture}[scale=1.1]
\draw (-3,-3.5) node {$Y_1=$};
\draw (0,0) node (u) {$\begin{ytableau}
6 \\
2 \\
1 & 1 & 1
\end{ytableau}$};
\draw (1.5,-1.5) node (u1) {$\begin{ytableau}
6 \\
2 \\
1 & 1 & 2
\end{ytableau}$};
\draw (3,-3) node (u11) {$\begin{ytableau}
6 \\
2 \\
1 & 2 & 2
\end{ytableau}$};
\draw (0,-3) node (u12) {$\begin{ytableau}
6 \\
2 \\
1 & 1 & 3
\end{ytableau}$};
\draw (-1.5,-4.5) node (u122) {$\begin{ytableau}
6 \\
3 \\
1 & 1 & 3
\end{ytableau}$};
\draw (1.5,-4.5) node (u112) {$\begin{ytableau}
6 \\
2 \\
1 & 2 & 3
\end{ytableau}$};
\draw (0,-6) node (u1122) {$\begin{ytableau}
6 \\
2 \\
1 & 3 & 3
\end{ytableau}$};
\draw(-1.5,-7.5) node (u11222) {$\begin{ytableau}
6 \\
3 \\
1 & 3 & 3
\end{ytableau}$};
\path[->] (u) edge[darkred,midway,right] node {$f_1$} (u1);
\path[->] (u1) edge[darkred,midway,right] (u11);
\path[->] (u12) edge[darkred,midway,right] (u112);

\path[->] (u1) edge[darkblue,midway,right] node {$f_2$} (u12);
\path[->] (u12) edge[darkblue,midway,right] (u122);
\path[->] (u11) edge[darkblue,midway,right] (u112);
\path[->] (u112) edge[darkblue,midway,right] (u1122);
\path[->] (u1122) edge[darkblue,midway,right] (u11222);


\draw (5,-3.5) node {$\bigsqcup$};
\draw (7,-2) node (u12) {$\begin{ytableau}
3 \\
2 \\
1 & 1 & 6
\end{ytableau}$};
\draw (8.5,-3.5) node (u112) {$\begin{ytableau}
3 \\
2 \\
1 & 2 & 6
\end{ytableau}$};
\draw [darkpurple] (7,-3.5) node (u123) {$\begin{ytableau}
4 \\
2 \\
1 & 1 & 6
\end{ytableau}$};
\draw [darkpurple] (8.5,-5) node (u1123) {$\begin{ytableau}
4 \\
2 \\
1 & 2 & 6
\end{ytableau}$};
\path[->] (u12) edge[darkred,midway,right] node {$f_1$} (u112);
\path[->] (u123) edge[darkred,midway,right] (u1123);
\path[->] (u12) edge[darkgreen,midway,right] node {$f_3$} (u123);
\path[->] (u112) edge[darkgreen,midway,right] (u1123);

\draw (10,-3.5) node {$\subseteq \Bcal(\lambda),$};
\end{tikzpicture}
\end{array}
\]
\[
\ytableausetup{boxsize=1em}
\begin{array}{cc}
\begin{tikzpicture}[scale=1.1]
\draw (-3,-3.5) node {$Y_2=$};
\draw (0,0) node (u) {$\begin{ytableau}
6 \\
2 \\
1 & 1 & 1
\end{ytableau}$};
\draw (1.5,-1.5) node (u1) {$\begin{ytableau}
6 \\
2 \\
1 & 1 & 2
\end{ytableau}$};
\draw (3,-3) node (u11) {$\begin{ytableau}
6 \\
2 \\
1 & 2 & 2
\end{ytableau}$};
\draw (0,-3) node (u12) {$\begin{ytableau}
6 \\
2 \\
1 & 1 & 3
\end{ytableau}$};
\draw (-1.5,-4.5) node (u122) {$\begin{ytableau}
6 \\
3 \\
1 & 1 & 3
\end{ytableau}$};
\draw (1.5,-4.5) node (u112) {$\begin{ytableau}
6 \\
2 \\
1 & 2 & 3
\end{ytableau}$};
\draw (0,-6) node (u1122) {$\begin{ytableau}
6 \\
2 \\
1 & 3 & 3
\end{ytableau}$};
\draw (-1.5,-7.5) node (u11222) {$\begin{ytableau}
6 \\
3 \\
1 & 3 & 3
\end{ytableau}$};
\draw [darkpurple] (0,-4.5) node (u123) {$\begin{ytableau}
6 \\
2 \\
1 & 1 & 4
\end{ytableau}$};
\draw [darkpurple] (1.5,-6) node (u1123) {$\begin{ytableau}
6 \\
2 \\
1 & 2 & 4
\end{ytableau}$};
\path[->] (u) edge[darkred,midway,right] node {$f_1$} (u1);
\path[->] (u1) edge[darkred,midway,right] (u11);
\path[->] (u12) edge[darkred,midway,right] (u112);
\path[->] (u123) edge[darkred,midway,right] (u1123);

\path[->] (u1) edge[darkblue,midway,right] node {$f_2$} (u12);
\path[->] (u12) edge[darkblue,midway,right] (u122);
\path[->] (u11) edge[darkblue,midway,right] (u112);
\path[->] (u112) edge[darkblue,midway,right] (u1122);
\path[->] (u1122) edge[darkblue,midway,right] (u11222);

\path[->] (u12) edge[darkgreen,midway,right] node {$f_3$} (u123);
\path[->] (u112) edge[darkgreen,midway,right] (u1123);

\draw (5,-3.5) node {$\bigsqcup$};
\draw (7,-2) node (u12) {$\begin{ytableau}
3 \\
2 \\
1 & 1 & 6
\end{ytableau}$};
\draw (8.5,-3.5) node (u112) {$\begin{ytableau}
3 \\
2 \\
1 & 2 & 6
\end{ytableau}$};
\path[->] (u12) edge[darkred,midway,right] node {$f_1$} (u112);

\draw (10,-3.5) node {$\subseteq \Bcal(\lambda).$};
\end{tikzpicture}
\end{array}
\]
Note that $\mathrm{char}(Y_1) = \mathrm{char}(Y_2)$ (the purple coloring indicates those elements which are not in $Y_1 \cap Y_2$). The connected component $X$ of $\Bcal(\lambda)_J$ containing $b_\lambda$ is isomorphic to $\Bcal(3,1,1,0,0)$, consisting of those $SSYT$ with entries $\le 5$, and is thus extremal when considered as a subset of the full crystal $\Bcal(\lambda)$. Furthermore, for any $y \in Y_1 \cup Y_2$ we have $e_4(y) = 0$, $f_5(y) = 0$, and $e_5(y) \in X$. Thus $X \sqcup Y_1$ and $X \sqcup Y_2$ are both (connected) extremal subsets of $\Bcal(\lambda)$, and $\mathrm{char}(X \sqcup Y_1) = \mathrm{char}(X \sqcup Y_2)$.
\end{example}

\bibliographystyle{alpha}

\bibliography{schubert_crystals}

\Addresses

\end{document}